\documentclass[12pt]{amsart}
\usepackage{amssymb,amsmath}
\usepackage{xcolor}
\usepackage{enumerate}
\usepackage{url}
\usepackage{stmaryrd}
\usepackage{color}
\title{On curves with one place at infinity
\footnote{2000 Mathematical Subject Classification: 14H20}}
\author{Abdallah Assi}
\address{LAREMA, Universit\'e d'Angers,
49045 Angers cedex 01, France}
\email{assi@univ-angers.fr}

\author{Wael Mahboub}
\address{Lebanese university, Faculty of sciences 2, Campus Pierre Gemayel, Fanar, P.O. Box 90656 Jdeidet, Lebanon}
\email{wmahboub@gmail.com}

\newtheorem{teorema}{Theorem}[section]

\newtheorem{proposicion}[teorema]{Proposition}
\newtheorem{lema}[teorema]{Lemma}
\newtheorem{definicion}[teorema]{Definition}

\newtheorem{corolario}[teorema]{Corollary}

\newtheorem{nota}[teorema]{Remark}
\newtheorem{exemple}[teorema]{Example}
\newtheorem{exemples}[teorema]{Examples}

\newenvironment{demostracion}[1]{\paragraph{\sl Proof#1}}{}
\newenvironment{demostracione}[1]{\paragraph{\sl Proof of Theorem 3.1.#1}}{}

\begin{document}
\maketitle

\noindent{\bf Abstract:}  Let ${\mathbb K}$ be an algebraically closed field of characteristic zero. Many of the geometric properties of the plane curves in ${\mathbb K}^2$ with one place at infinity can be obtained from the arithmetic of numerical semigroups associated with these curves. In this survey, we recall the main properties of these semigroups as well as the geometric results obtained from these properties. We also address some problems. 
\medskip

\section*{Introduction and notations}
 
\medskip

\noindent Let ${\mathbb K}$ be an algebraically closed field of characteristic zero, and let $f$ be a polynomial of ${\mathbb
  K}[x,y]$. For all $\lambda\in {\mathbb K}$, we set $f_{\lambda}=f-\lambda$. Hence, we get a family of polynomials $(f_{\lambda})_{\lambda\in {\mathbb K}}$. We suppose that $f_{\lambda}$ is a reduced polynomial for all $\lambda\in {\mathbb K}$. Given a polynomial $P\in {\mathbb K}[x,y]$, we set $C_{P}=V(P)=\lbrace (a,b)\in{\mathbb K}^2|P(a,b)=0\rbrace$ for the plane curve associated with $P$.  Let $h_f(x,y,u)=u^nf(\dfrac{x}{u},\dfrac{y}{u})$, and let $\tilde{C}_f=V(h_f)$ be the projective curve in ${\mathbb P}_{\mathbb K}^2$ obtained by adding the line at infinity $u=0$ (whence, if $P=(a,b)$ is a point in the affine plane, then its projective coordinates are $(a:b:1)$. In particular, the affine plane is identified to ${\mathbb P}_{\mathbb K}^2$ minus the line at infinity $u=0$).  Write $f=\sum_{i=0}^n f_i$ where $f_i$ is homogeneous for all $i\in\lbrace 1,\cdots,n\rbrace$. We have $h_f=\sum_{i=0}^n u^{n-i}f_i$. The points of $f$ at infinity are defined by $u=0,f_n=0$. Given a point $p$ at infinity, we define the number of places of $f$ at $p$ to be the number of irreducible components of the local equation of $h_f$ at $p$. As the number of points at infinity is finite, the number of places at infinity, which is the sum of the number of places at these points, is finite. We denote this number by $\xi_{\infty}(f)$, and we call it the number of places of $f$ at infinity. We define in a similar way the number of places of $f$ at a point $p\in C_f$, and we denote it by $\xi_p(f)$.
  
  
  \noindent Let $g$ be a non-zero polynomial of ${\mathbb K}[x][y]$. We define the intersection
multiplicity of $f$ with $g$, denoted int$(f,g)$, to be the dimension of the ${\mathbb K}$-vector space $\displaystyle{{{{{\mathbb
    K}[x][y]}}\over {(f,g)}}}$. Let $p=(a,b)\in
C_f\cap C_g$. Modulo the translation $X=x-a,Y=y-b$, we may assume that $p=(0,0)$. We define the intersection multiplicity of $f$ with $g$ at $p$, denoted int$_p(f,g)$, to be the dimension of the ${\mathbb K}$-vector space $\displaystyle{{{{{\mathbb
    K}[[x,y]]}}\over {(f,g)}}}$. Note that int$(f,g)=\sum_{p\in  C_f\cap C_g}{\rm int}_p(f,g)$. 

  \noindent Suppose that $f$ has one place at infinity, or equivalently, that $\xi_{\infty}(f)=1$. It follows that $f$ has one point at infinity, whence $f_n=(ax+by)^n$ for some $(a,b)\in {\mathbb K}^2\setminus\lbrace (0,0)\rbrace$. In particular, modulo the change of coordinates $Y=ax+by, X=x$, we may assume that $f_n=y^n$, whence
  
  $$
  f=y^n+a_1(x)y^{n-1}+\cdots+a_n(x)
  $$
  
  \noindent with ${\mathrm deg}_x(a_i(x)<i$ for all $i$ such that $a_i(x)\not=0$ (here ${\mathrm deg}$ denotes the degree). 
  
  \medskip 
  
  \noindent Curves with one place at infinity cover a large class of algebraic plane curves. For instance, let $x(t),y(t)\in{\mathbb K}[t]$, and let $f(x,y)$ be a generator of the kernel of the map $\phi:{\mathbb K}[x,y]\longmapsto {\mathbb K}[t], \phi(x)=x(t), \phi(y)=y(t)$. Then $f$ has one place at infinity. Such a polynomial is also called a polynomial curve. In fact, the development of the theory of curves with one place at infinity was motivated by the following problem, related to polynomial curves: let $n={\rm deg}_tx(t)$ and $m={\rm deg}_ty(t)$. Suppose that ${\mathbb K}[x(t),y(t)]={\mathbb K}[t]$, then, either $n$ divides $m$, or $m$ divides $n$. Segre (\cite{segre}) and Canals and Luis (\cite{canals-luis}) proposed two proofs of this result, but their proofs were wrong. In \cite{a-m}, S.S. Abhyankar and T.T. Moh proposed a proof based on the results of \cite{a-m1} and \cite{a-m2} where they developed the theory of semigroups associated with curves with one place at infinity. Their proof is algebraic and uses the arithmetic of these semigroups.  

  \noindent The aim of this survey is to recall the main properties of curves with one place at infinity, with a focus on the arithmetic properties of semigroups associated with these curves, and their algebraic and geometric applications. The paper is organized as follows: in Section 1 we define  the semigroup associated with a curve with one place at infinity as well as the set of its numerical invariants. Then we give the arithmetic properties of this semigroup. In Section 2 we recall the criterion of Abhyankar for deciding if a polynomial has one place at infinity. This criterion, based on the notion of approximate roots, gives a procedure for an algebraic classification of curves with one place at infinity. This is what we describe in Section 3. Section 4 is devoted to some algebraic and geometric applications. We give first a proof of Abhankar-Moh result. Then we focus on polynomial curves. We prove Then we prove that a family $(f_{\lambda})_{\lambda}$ of curves with one place at infinity contains at most two polynomial curves, and we give an example of a polynomial curve with two non equivalent embedding in the plane. This gives a counter example to a conjecture of V. Shpilrain  and J.-T. Yu (see \cite{V-Y}).
  
  \noindent We stress on the algebraic character of this survey, precisely the semigroup point of view. A geometric proof of Abhyankar-Moh Theorem can be found, among others, in \cite{artal} and \cite{evelia}.  
  \section{Numerical semigroups associated with one place curves}

\medskip

\noindent Let the notations be as in the introduction, in particular $f=y^n+a_1(x)y^{n-1}+\ldots+a_n(x)$ with ${\mathrm deg}_xa_i(x)<i$ for all $i$ such that $a_i(x)\not=0$. Let $U={\mathbb K}[x,y]\setminus (f)$. We have the following.

\begin{itemize}
    \item For all $g_1,g_2\in U, {\rm int}(f,g_1g_2)={\rm int}(f,g_1)+{\rm int}(f,g_2)$.
    \item int$(f,1)=0$. 
\end{itemize}

\noindent In particular, the set 

$$
S=\Gamma(f)=\lbrace {\rm int}(f,g)|g\in U\rbrace
$$

\noindent is a submonoid of ${\mathbb N}$. We will see later that if $\xi_{\infty}(f)=1$, then $S$ is a numerical semigroup, i.e. ${\mathbb N}\setminus S$ is a finite set. 

\subsection{Meromorphic curves}

\medskip

\noindent Let $f$ be as above. We set

$$
F(X,y)=f(X^{-1},y)\in {\mathbb K}[X^{-1}][y]\subset {\mathbb K}((X))[y].
$$

\noindent We get this way a polynomial in one variable over the field of meromorphic series in $X$. Write

$$
F(X,y)=\prod_{i=1}^{\xi(F)}F_i
$$

\noindent where $F_i$ is irreducible in ${\mathbb K}((X))[y]$ for every $i\in\lbrace 1,\cdots,\xi(F)\rbrace$. We have the following.

\medskip

\begin{proposicion}\label{finite-infinite} $F(X,X^{-1}y)=X^{-n}f_{\infty}(X,y)$.  In particular, $\xi(F)=\xi_{\infty}(f)$.
    
\end{proposicion}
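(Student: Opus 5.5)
The plan is to compute $F(X,X^{-1}y)$ directly from the homogeneous decomposition $f=\sum_{i=0}^n f_i$. Here $f_{\infty}$ is understood as the local equation of $h_f$ at the unique point at infinity. Since $f_n=y^n$, that point is $(1:0:0)$, and in the affine chart $x=1$ with local coordinates $(u,y)$ the local equation is $f_{\infty}(u,y)=h_f(1,y,u)$. By homogeneity, $f_i(X^{-1},X^{-1}y)=X^{-i}f_i(1,y)$. Hence
$$F(X,X^{-1}y)=f(X^{-1},X^{-1}y)=\sum_{i=0}^n X^{-i}f_i(1,y)=X^{-n}\sum_{i=0}^n X^{n-i}f_i(1,y)=X^{-n}h_f(1,y,X),$$
which is the first assertion.

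For the equality $\xi(F)=\xi_{\infty}(f)$, I first note two properties of $f_{\infty}(X,y)=\sum_i X^{n-i}f_i(1,y)$. It is a polynomial in $y$ over ${\mathbb K}[X]$, and it is monic of degree $n$ in $y$, since the only term of $y$-degree $n$ comes from $f_n(1,y)=y^n$ (the condition ${\rm deg}_x a_i<i$ forces every other $f_i$ to have $y$-degree $<i\le n$). Moreover, $f_{\infty}(0,y)=f_n(1,y)=y^n$. So $f_{\infty}$ is a distinguished (Weierstrass) polynomial in $y$ over ${\mathbb K}[[X]]$.

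The map $y\mapsto X^{-1}y$ is a ${\mathbb K}((X))$-algebra automorphism of ${\mathbb K}((X))[y]$, and $X^{-n}$ is a unit there. Hence the number of irreducible factors of $F$ in ${\mathbb K}((X))[y]$ equals the number of irreducible factors of $f_{\infty}$ in ${\mathbb K}((X))[y]$, counted with multiplicity. Reducedness is preserved by the substitution, so the count is the same whether or not multiplicities are counted.

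It remains to compare factorizations of $f_{\infty}$ in ${\mathbb K}((X))[y]$ and in ${\mathbb K}[[X,y]]$, since $\xi_{\infty}(f)$ is by definition the number of irreducible components of $f_{\infty}$ in ${\mathbb K}[[X,y]]$. This is the main (though standard) step, and I would carry it out in two parts.

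\begin{enumerate}[(i)]
\item By Gauss's lemma over the UFD ${\mathbb K}[[X]]$, a monic polynomial factors in ${\mathbb K}((X))[y]$ into monic irreducibles with coefficients already in ${\mathbb K}[[X]]$. Each monic factor $g$ of $f_{\infty}$ satisfies $g(0,y)\mid y^n$, so $g(0,y)=y^k$, i.e.\ every factor is itself distinguished.
\item By the Weierstrass preparation theorem, any factorization of a distinguished polynomial in ${\mathbb K}[[X,y]]$ can be normalized to a factorization into distinguished polynomials. Moreover, a distinguished polynomial is irreducible in ${\mathbb K}[[X,y]]$ if and only if it is irreducible in ${\mathbb K}[[X]][y]$.
\end{enumerate}

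Together these give a bijection between the irreducible factors of $f_{\infty}$ in ${\mathbb K}((X))[y]$ and its irreducible factors in ${\mathbb K}[[X,y]]$. Therefore $\xi(F)=\xi_{\infty}(f)$.
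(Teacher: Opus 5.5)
Your proof is correct, and since the paper only says ``Easy exercise,'' your argument is the standard way to supply the omitted details. It has two steps: the homogeneity computation $f_i(X^{-1},X^{-1}y)=X^{-i}f_i(1,y)$, then Gauss's lemma over ${\mathbb K}[[X]]$ plus Weierstrass preparation, which match the irreducible factors of the distinguished polynomial $f_{\infty}$ in ${\mathbb K}((X))[y]$ with those in ${\mathbb K}[[X,y]]$.

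There is one harmless slip. For $i<n$, the $y$-degree of $f_i(1,y)$ is $\le i<n$ by homogeneity alone, and it can equal $i$. The hypothesis ${\rm deg}_x a_i<i$ is what guarantees $f_n=y^n$. The conclusion that $f_{\infty}$ is monic of degree $n$ in $y$ is unaffected.
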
 
\begin{proof}Easy exercise.
    \end{proof}

\noindent Suppose that $\xi_{\infty}(f)=1$. From Proposition \ref{finite-infinite}, it follows that $F$ is irreducible in ${\mathbb K}((X))[y]$. Let 

$$
y(x)=\sum_{p\geq p_0}c_px^{\dfrac{p}{n}}
$$

\noindent be a root of $F(X,y)=0$ in ${\mathbb K}((X^{1/n}))$. If $w$ is a primitive root of unity in ${\mathbb K}$, then we have

$$
F(t^n,y)=\prod_{k=1}^n (y-y(w^kx)).
$$

\noindent We set Root$(F)=\lbrace y(w^kx)|k=1,\cdots,n\rbrace$.

\medskip

\noindent {\bf Newton-Puiseux exponents.} Set Supp$(y(x))=\lbrace p|c_p\not=0\rbrace$. Clearly, Supp$(y(w^kx))={\rm Supp}(y(x))$ for all $k\in\lbrace 1,\cdots,n\rbrace$. We denote this set by Supp$(F)$. Let GCD stands for the greatest common divisor. We define the set of Newton-Puiseux exponents of $F$ as follows:

\medskip

\noindent $d_1=n$, $m_1={\rm inf}\{p\in{\rm Supp}(F), p\nmid n\}$,  $d_2={\rm GCD}(n,m_1)$, and for all $k\geq 2$

$$
m_k={\rm inf}\{p\in{\rm Supp}(F)| p>m_{k-1}, p\nmid d_k\}, d_{k+1}={\rm GCD}(m_k,d_k).
$$

 \noindent As $F$ is irreducible, it follows that it is the minimal polynomial of $y(X)$ over ${\mathbb K}((X))$, whence GCD$(n,{\rm Supp}(F))=1$ (otherwise, if GCD$(n,{\rm Supp}(F))=d>1$, then the minimal polynomial of $y(X)$ would be of degree $\dfrac{n}{d}$, which is a contradiction). It follows that there exists $h\geq 1$ such that $d_{h+1}=1$. We define the set of Newton-Puiseux exponents of $F$ to be the set NP$(F)=\lbrace m_1,\cdots,m_h\rbrace$. We also set $e_k=\dfrac{d_k}{d_{k+1}}$ for all $k\in\lbrace 1,\cdots,h\rbrace$. 

 \medskip

 \noindent Let $a\in{\mathbb N}$. Given $H(X)\in{\mathbb K}((X^{1/a}))\setminus\lbrace 0\rbrace$, we denote by $O_X(H)$ the order of $H$ with respect to $X$. Given two elements $H_1\not=H_2\in{\mathbb K}((X^{1/a}))\setminus\lbrace 0\rbrace$, we set ${\rm c}(H_1,H_2)=O_X(H_1-H_2)$, and we call it the contact of $H_1$ with $H_2$. The next Lemma characterizes the set of Newton-Puiseux exponents in terms of contacts between the Roots of $f$.

 \begin{lema} \label{contact} (See \cite{a1}) Set Root$(F)=\{y_1,\cdots,y_n\}$. With the notations above we have the following.

 \begin{itemize}
     \item For all $i\geq 1$, there exists $k\in\lbrace 2,\cdots,h\rbrace$ such that ${\rm c}(y_1,y_k)=m_i/n$.

     \item The cardinality of $\lbrace y_k|{\rm c}(y_1,y_k)=m_i/n\rbrace$ is $d_i-d_{i+1}$.
     
 \end{itemize}
     
 \end{lema}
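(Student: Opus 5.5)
Write $y(X)=\sum_{p}c_pX^{p/n}$ and let $w$ be a primitive $n$-th root of unity. The roots are $y_k=y(w^kX)=\sum_p c_p w^{kp}X^{p/n}$, so the plan is to compute each contact ${\rm c}(y_n,y_k)$ directly from the support and then count. (We label the distinguished root $y(X)$ as $y_n$, corresponding to $k=n$; the statement's $y_1$ is the same up to relabelling.) The statement should be read with $i\in\{1,\dots,h\}$ and $k\in\{1,\dots,n\}$; the index ranges printed in the lemma look like typos, and I will prove this corrected version.

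\textbf{Computing the contact.} We have
$$y_n-y_k=\sum_{p\in{\rm Supp}(F)}c_p(1-w^{kp})X^{p/n}.$$
Hence ${\rm c}(y_n,y_k)=\min\{p\in{\rm Supp}(F) : n\nmid kp\}/n$. To evaluate this minimum, use the chain $d_1=n>d_2>\cdots>d_{h+1}=1$, where $d_{i+1}={\rm GCD}(n,m_1,\dots,m_i)$. By the definition of the $m_i$, every $p\in{\rm Supp}(F)$ with $p<m_i$ is divisible by $d_i$ (induction on $i$), and $m_i$ itself is not divisible by $d_i$.

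\textbf{Identifying which $k$ give which contact.} Fix $k\in\{1,\dots,n-1\}$ and let $i$ be the unique index with $e_1\cdots e_{i-1}\mid k$ but $e_1\cdots e_i\nmid k$. Equivalently, writing $n/d_j=e_1\cdots e_{j-1}$, we have $n/d_i\mid k$ and $n/d_{i+1}\nmid k$. Such an $i$ exists because $n/d_{h+1}=n\nmid k$.

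For $p<m_i$ in the support we have $d_i\mid p$, so $n\mid kp$ and the coefficient $1-w^{kp}$ vanishes. For $p=m_i$ we need $n\nmid km_i$. Write $k=(n/d_i)k'$. Then $n\mid km_i$ iff $d_i\mid k'm_i$ iff $d_i/{\rm GCD}(d_i,m_i)=e_i$ divides $k'$ iff $n/d_{i+1}\mid k$, which is false by the choice of $i$. So the coefficient at $m_i$ is nonzero, and ${\rm c}(y_n,y_k)=m_i/n$.

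\textbf{Counting and existence.} The set of $k\in\{1,\dots,n\}$ with contact exactly $m_i/n$ consists of the multiples of $n/d_i$ that are not multiples of $n/d_{i+1}$. Its cardinality is $d_i-d_{i+1}$. Since $e_i>1$, this number is positive, which gives the first bullet; the count itself is the second bullet.

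The only delicate step is the divisibility equivalence $n\mid km_i \iff n/d_{i+1}\mid k$ for $k$ a multiple of $n/d_i$, together with the induction showing that support elements below $m_i$ are divisible by $d_i$. Everything else is bookkeeping.
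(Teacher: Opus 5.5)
Your proof is correct and follows the same route as the paper: you expand $y_1-y_k=\sum_p c_p(1-w^{kp})X^{p/n}$, note that the first surviving exponent is $m_i$ exactly when $w^k$ is a $d_i$-th but not a $d_{i+1}$-th root of unity (your condition $n/d_i\mid k$, $n/d_{i+1}\nmid k$), and count. You also supply details the paper's sketch leaves implicit, namely the induction that support exponents below $m_i$ are divisible by $d_i$, the divisibility equivalence at $p=m_i$, and the count $d_i-d_{i+1}$; your correction of the index ranges to $i\in\{1,\dots,h\}$ and $k\in\{1,\dots,n\}$ is the right reading of the typos.
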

\begin{proof} Let $X=t^n$, and for all $k\in\lbrace 1,\cdots,n\rbrace$, let $Y_k=y(t^n)$. Write $Y_1(t)=\sum_{p\geq p_0}c_pt^p$. For all $k\geq 2$, we have $Y_k(t)=Y_1(w^kt)$, where $w$ is a primitive $n$-th root of unity in ${\mathbb K}$. We have $Y_1-Y_k=\sum_{p\geq p_0}(1-w^p)t^p$. It follows that $O_t(Y_1-Y_k)=s$ if and only if $w^p=1$ for all $p<s$, whence $s\in \lbrace m_1,\cdots,m_h\rbrace$. Let $s=m_k$. If $w^p=1$ for all $p<s$, and $w^s\not=1$, then $w^{d_k}=1$ and $w^{d_{k+1}}\not=1$. This finishes the proof.
\end{proof}

\medskip 

\noindent {\bf Generators of $S=\Gamma(f)$}. Next we shall see how to calculate a set of generators of $S=\Gamma(f)$. We first introduce the sequence $(R_k)_{0\leq k\leq h}$ as follows: $R_0=-n, R_1=m_1$, and for all $k\geq 1$

$$
 R_{k+1}=R_ke_k+m_k-m_{k-1}.
$$
 
\noindent We easily verify that $d_k={\rm GCD}(R_{0},\cdots,R_{k-1})$ for all $k\geq 1$. 

\medskip 

\noindent Given $G\in {\mathbb K}((X))[y]$, we define the intersection of $F$ with $G$ by 

$$
{\rm Int}(F,G)=O_X(\prod_{y(X)\in {\rm Root}(F)}G(X,y(X))).
$$

\noindent Note that $O_X(G(X,y(X))$ does not depend on the choice of the root $y(X)$ of $F$. In particular, Int$(F,G)=nO_X(G(X,y_1(X))$. It also follows from the definition that ${\rm Int}(F,G)=O_X({\rm Res}_y(F,G))$, where Res stands for resultant. 

\noindent Given $h\in{\mathbb K}[z]\setminus\lbrace 0\rbrace$, we denote by ${\rm deg}_zh$ the degree in $z$ of $h$. The next Lemma relates the intersection of polynomials of ${\mathbb K}[x][y]$ to the intersection of their associated meromorphic polynomials.

\begin{lema} \label{int-Int} Let $g\in {\mathbb K}[x,y]$, and let $G(X,y)=g(X^{-1},y)\in {\mathbb K}[X^{-1},y]\subseteq {\mathbb K}((X))[y]$. We have int$(f,g)=-{\rm Int}(F,G)$.
    
\end{lema}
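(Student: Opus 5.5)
We compute both sides through the resultant with respect to $y$. Since $f$ is monic in $y$ of degree $n$, the ring ${\mathbb K}[x][y]/(f)$ is a free ${\mathbb K}[x]$-module with basis $1,y,\dots,y^{n-1}$. Multiplication by $g$ on this module is given by an $n\times n$ matrix over ${\mathbb K}[x]$, and its determinant is ${\rm Res}_y(f,g)(x)$, up to sign. By the structure theorem for finitely generated modules over the PID ${\mathbb K}[x]$ (or the Smith normal form), we get
$$
{\rm int}(f,g)=\dim_{\mathbb K}{\mathbb K}[x,y]/(f,g)=\deg_x {\rm Res}_y(f,g)(x).
$$
This is finite when $g\notin (f)$, because $f$ is reduced and shares no common factor with $g$ (if $f\mid g$ both sides are infinite or the statement is vacuous). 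If the resultant were zero, $f$ and $g$ would share a factor, so we may assume $g\in U$ with $f$ and $g$ coprime.

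The key identity is then ${\rm Res}_y(F,G)(X)={\rm Res}_y(f,g)(X^{-1})$. The resultant is a universal polynomial in the coefficients, and the coefficients of $F$ and $G$ are those of $f$ and $g$ with $x$ replaced by $X^{-1}$. One must check that the formal $y$-degrees agree. For $F$ this holds because $f$ is monic of degree $n$. For $G$, the leading coefficient of $g$ in $y$ is a nonzero polynomial in $x$, and it remains nonzero after the substitution. So the $y$-degree of $G$ is $\deg_y g$.

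For a polynomial $r(x)\in{\mathbb K}[x]\setminus\{0\}$ of degree $d$, we have $O_X(r(X^{-1}))=-d$, since the term $x^d$ gives the most negative power of $X$. Hence
$$
{\rm Int}(F,G)=O_X({\rm Res}_y(F,G))=-\deg_x{\rm Res}_y(f,g)=-{\rm int}(f,g).
$$
We also need the identity ${\rm Int}(F,G)=O_X({\rm Res}_y(F,G))$ stated above. It follows because $F$ is monic, so ${\rm Res}_y(F,G)=\prod_{y_i\in{\rm Root}(F)}G(X,y_i)$, with the sign convention chosen so that this holds.

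The main obstacle is the first step, namely justifying that $\dim_{\mathbb K}{\mathbb K}[x][y]/(f,g)$ equals $\deg_x$ of the determinant of multiplication by $g$ on the free module ${\mathbb K}[x]^n$. This requires the cokernel of an injective endomorphism $M$ of ${\mathbb K}[x]^n$ to have dimension $\deg\det M$. We obtain this from the Smith normal form: $M$ is equivalent to ${\rm diag}(\delta_1,\dots,\delta_n)$, so
$$
\dim{\rm coker}M=\sum\deg\delta_i=\deg\det M.
$$
Finally, we identify this determinant with ${\rm Res}_y(f,g)$, which is the norm of $g$ from ${\mathbb K}(x)[y]/(f)$ to ${\mathbb K}(x)$ since $f$ is monic.
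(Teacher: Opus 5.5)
Your proof is correct and follows the paper's route. The paper's one-line proof is the chain ${\rm Int}(F,G)=O_X({\rm Res}_y(F,G))=-\deg_x{\rm Res}_y(f,g)=-{\rm int}(f,g)$. You supply the justifications the paper leaves implicit: the Smith normal form for $\dim_{\mathbb K}{\mathbb K}[x,y]/(f,g)=\deg_x{\rm Res}_y(f,g)$, the compatibility of the resultant with $x\mapsto X^{-1}$, and $O_X(r(X^{-1}))=-\deg r$.

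One small correction: for $g\notin(f)$, coprimality of $f$ and $g$ (hence ${\rm Res}_y(f,g)\neq 0$) comes from $f$ being irreducible, not from $f$ being reduced. Irreducibility is forced by $\xi_\infty(f)=1$, since any factorization of the monic $f$ would give a factorization of $F$ in ${\mathbb K}((X))[y]$.
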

\begin{proof} We have ${\rm Int}(F,G)=O_X({\rm Res}_y(F,G))=-{\rm deg}_x({\rm Res}_y(f,g))=-{\rm int}(f,g)$.
\end{proof}

\noindent It follows from Lemma \ref{int-Int} that, if we set 

$$
U_1=\lbrace G(X,y)=g(X^{-1},y)|g\in {\mathbb K}[x,y]\setminus(f)\rbrace,
$$

\noindent then the set $\lbrace {\rm Int}(F,G)| G\in U_1\rbrace$ is a submonoid of $-{\mathbb N}$, which is nothing but $-S=\lbrace -s|s\in S\rbrace$. Let $r_k=-R_k$ for all $k\in\lbrace 0,\cdots,h\rbrace$. Next, we shall prove that $-S=\langle R_0,\cdots,R_h\rangle$ as a submonoid in $-{\mathbb N}$, which would imply that $S=\langle r_0,\cdots,r_h\rangle$ as a submonoid in ${\mathbb N}$.

\medskip 

\noindent {\bf Pseudo-roots.} Let $k\in\lbrace 1,\cdots,h\rbrace$, and let $\bar{y}^k(X)=\sum_{p<m_k}c_pX^{p/n}$. Let $G_k(X,y)$ be the minimal polynomial of $\bar{y}^k$ over ${\mathbb K}((X))$. As GCD$(n,{\rm Supp}(\bar{y}^k))=d_k$, it follows that deg$_yG_k=\dfrac{n}{d_k}$. We say that $G_k$ is the $k$-th pseudo-root of $F$. With these notations, we have the following.

\begin{proposicion} (see \cite{a1}) For all $k\in\lbrace 1,\cdots,h\rbrace$, the polynomial $G_k$ is irreducible. Moreover, for all $k\geq 2$, the following conditions hold.
\begin{enumerate}
    \item ${\rm Int}(F,G_k)=nO_X(G_k(X,y(X)))=R_k$.
    \item The set of Newton-Puiseux exponents of $G_k$ is given by ${\rm NP}(G_k)=\lbrace m_1/d_{k},\cdots,m_{k-1}/d_k\rbrace$.
    \item For all $k\geq 2$, $\lbrace G_1,\cdots,G_{k-1}\rbrace$ is the set of pseudo-roots of $G_k$, and for all $i\in\lbrace 1,\cdots,k-1\rbrace, {\rm Int}(G_k,G_i)=R_i/d_k$.
\end{enumerate}
\end{proposicion}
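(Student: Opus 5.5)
Since $G_k$ is by definition the minimal polynomial of $\bar{y}^k$ over ${\mathbb K}((X))$, it is irreducible. So the whole proof reduces to computing orders of differences of conjugates, using the contact structure of Lemma \ref{contact}. I work throughout with $X=t^n$ and the roots $y_j(X)=y(w^jX)$, where $w$ is a primitive $n$-th root of unity (acting on $X^{1/n}$).

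\emph{Roots of $G_k$.} Because GCD$(n,{\rm Supp}(\bar{y}^k))=d_k$, the conjugates of $\bar{y}^k$ are exactly the truncations $\bar{y}^k_j$ of the $y_j$ below exponent $m_k/n$. Two indices $j,j'$ give the same truncation iff $j\equiv j' \bmod n/d_k$. This yields the $n/d_k$ roots of $G_k$, and each root $z$ of $G_k$ is the truncation of exactly $d_k$ roots of $F$.

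\emph{Statement (1).} Write $O_X(G_k(X,y_1))=\sum_{z}O_X(y_1-z)$, the sum running over the roots $z$ of $G_k$. There are two cases for $z=\bar{y}^k_j$:
\begin{itemize}
\item If ${\rm c}(y_1,y_j)=m_i/n$ with $i<k$, then $y_1$ and $\bar{y}^k_j$ already differ before exponent $m_k/n$, so $O_X(y_1-z)=m_i/n$.
\item If ${\rm c}(y_1,y_j)\ge m_k/n$, then $z=\bar{y}^k$, and $O_X(y_1-\bar{y}^k)=m_k/n$, because $m_k\in{\rm Supp}(F)$ is the first omitted exponent.
\end{itemize}
By Lemma \ref{contact}, exactly $d_i-d_{i+1}$ roots of $F$ have contact $m_i/n$ with $y_1$. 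These group into $(d_i-d_{i+1})/d_k$ distinct roots of $G_k$. Hence
$$
nO_X(G_k(X,y(X)))=\frac{1}{d_k}\sum_{i=1}^{k-1}(d_i-d_{i+1})m_i+m_k .
$$
It remains to show this equals $R_k$, which I do by induction on $k$. For $k=2$ the right-hand side is $(n-d_2)m_1/d_2+m_2=e_1m_1-m_1+m_2=R_2$. Passing from $k$ to $k+1$, multiply the sum part by $e_k=d_k/d_{k+1}$ and add $(d_k-d_{k+1})m_k/d_{k+1}+m_{k+1}$. This gives $e_k(R_k-m_k)+e_km_k-m_k+m_{k+1}=R_ke_k+m_{k+1}-m_k=R_{k+1}$. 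The index bookkeeping in this recurrence (whether it is written with $m_k-m_{k-1}$ or $m_{k+1}-m_k$) is the only place requiring care.

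\emph{Statements (2) and (3).} Regard $\bar{y}^k$ as a series in $X^{1/(n/d_k)}$: the exponent $p/n$ becomes $(p/d_k)/(n/d_k)$. The same GCD-descent then gives characteristic exponents $m_1/d_k,\dots,m_{k-1}/d_k$, with $d_i$ replaced by $d_i/d_k$. For $i<k$, truncating $\bar{y}^k$ below $m_i$ gives $\bar{y}^i$, so the pseudo-roots of $G_k$ are $G_1,\dots,G_{k-1}$. Finally, apply the formula established in (1) to $G_k$ in place of $F$. All quantities $n,d_j,m_j$ are scaled by $1/d_k$, and the recurrence defining $R_j$ is homogeneous of degree one in these data, so ${\rm Int}(G_k,G_i)=R_i/d_k$.

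The main obstacle is the careful counting in step (1): one must check that the truncation map from roots of $F$ to roots of $G_k$ is exactly $d_k$-to-one and compatible with the contact classes of Lemma \ref{contact}.
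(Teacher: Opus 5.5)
Your proof is correct. The paper gives no argument for this proposition; it only refers to \cite{a1}, so there is no internal proof to match. Your route, counting contacts of conjugate roots via Lemma \ref{contact}, is the standard one. It is also the technique the paper itself uses in Section 4 to obtain ${\rm Int}(F,F_y)=\sum_{k}(d_k-d_{k+1})m_k$.

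You are right to run the induction with $R_{k+1}=R_ke_k+m_{k+1}-m_k$. The recurrence as printed in the paper, $R_{k+1}=R_ke_k+m_k-m_{k-1}$, is an index slip: at $k=1$ it involves an undefined $m_0$. The corrected form is the one that reproduces the paper's examples. For instance, for $f=y^6-2x^2y^3-xy+x^4$ one finds $m_1=-4$ and $m_2=3$, and your formula gives $\frac{1}{2}(6-2)(-4)+3=-5=R_2$, matching $r_2=5$.

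The compatibility you single out as the main obstacle is already contained in your first bullet. There you show that $O_X(y_1-z)$ depends only on the root $z$ of $G_k$. Hence the $d_k$ roots of $F$ lying over a given $z\neq\bar y^k$ all fall in a single contact class $m_i/n$ with $i<k$. The fibre over $\bar y^k$ consists of $y_1$ together with the $\sum_{i\geq k}(d_i-d_{i+1})=d_k-1$ roots of contact at least $m_k/n$, as required.

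Finally, your formula for (1) also holds for $k=1$, where it gives $R_1=m_1$.
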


\noindent Let $\underline{G}=(G_1,\cdots,G_h)$ be the sequence of pseudo-roots of $F$. Given $G\in U_1$, we can write 

$$
G=\sum_{\underline{\theta}}c_{\underline{\theta}}(X)G_1^{\theta_1}\cdots G_h^{\theta_h}F^{\theta_{h+1}}
$$

\noindent with $0\leq \theta_i<e_i$ for all $i\in \lbrace 1,\cdots,h\rbrace$. This expression is unique (see \cite{a1}). We call it the $\underline{G}$-adic expansion of $G$. As $g\in {\mathbb K}[x,y]\setminus (f)$, there is at least one $\underline{\theta}$ such that $\theta_{h+1}=0$. For such a $\underline{\theta}$, we have 

$$
{\rm Int}(F,c_{\underline{\theta}}(X)G_1^{\theta_1}\cdots G_h^{\theta_h})=O_Xc_{\theta}(X)n+\sum_{i=1}^h\theta_iR_i
$$

\noindent The following technical lemma shows that two distinct monomials in the $\underline{G}$-adic expansion of $G$ intersect $F$ differently.

\begin{lema} \label{u} Let $N=\lbrace \underline{\theta}=(\theta_0,\theta_1,\cdots,\theta_h)\in{\mathbb N}^{h+1}|0\leq \theta_i<e_i$ for all $i\geq 1\rbrace$. For all $\underline{\alpha},\underline{\beta}\in N$, if $\underline{\alpha}\not=\underline{\beta}$, then $\alpha_0 n+\sum_{i=1}^h\alpha_iR_i\not=\beta_0 n+\sum_{i=1}^h\beta_iR_i$.
    
\end{lema}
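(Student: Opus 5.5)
Note first that in the statement $\alpha_0,\beta_0$ are arbitrary nonnegative integers and $R_0=-n$, so $\alpha_0 n=-\alpha_0R_0$. It is therefore enough to show: if $\sum_{i=0}^h \gamma_i R_i=0$ with $\gamma_0\in{\mathbb Z}$ and $|\gamma_i|<e_i$ for $1\le i\le h$, then $\gamma_i=0$ for all $i$. Apply this to $\gamma=\underline{\alpha}-\underline{\beta}$, with $\gamma_0=\beta_0-\alpha_0$ (the sign flip comes from $R_0=-n$). The bound $|\alpha_i-\beta_i|<e_i$ is automatic because $0\le\alpha_i,\beta_i<e_i$.

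We argue by descending induction on the index, peeling off $\gamma_h$ first. We use $d_k={\rm GCD}(R_0,\dots,R_{k-1})$, which the paper notes after defining $(R_k)$, and $d_{h+1}=1$.
\begin{itemize}
\item Suppose $\gamma_{h}=\cdots=\gamma_{k+1}=0$. Then $\gamma_kR_k=-\sum_{i<k}\gamma_iR_i$ is divisible by $d_k$.
\item Write $R_k=d_{k+1}R'_k$ and $d_k=d_{k+1}e_k$. Since $d_{k+1}={\rm GCD}(d_k,R_k)$, we have ${\rm GCD}(e_k,R'_k)=1$.
\item Hence $d_k\mid \gamma_kR_k$ gives $e_k\mid\gamma_kR'_k$, so $e_k\mid\gamma_k$. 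With $|\gamma_k|<e_k$ this forces $\gamma_k=0$.
\end{itemize}
Running this from $k=h$ down to $k=1$ kills $\gamma_h,\dots,\gamma_1$. The relation then reads $\gamma_0R_0=-\gamma_0 n=0$, so $\gamma_0=0$.

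The only point to check is the identity $d_{k+1}={\rm GCD}(d_k,R_k)$. It is just the quoted fact $d_j={\rm GCD}(R_0,\dots,R_{j-1})$ applied at $j=k$ and $j=k+1$. If one wants to verify that fact, use induction on $k$. From $R_{k+1}=R_ke_k+m_k-m_{k-1}$, and since $d_k$ divides $m_{k-1}$ and $R_k e_k$ (note $d_{k+1}e_k=d_k$ and $d_{k+1}\mid R_k$), one gets $R_{k+1}\equiv m_k$ modulo $d_{k+1}$-multiples suitably. This yields ${\rm GCD}(d_{k+1},R_{k+1})={\rm GCD}(d_{k+1},m_k)=d_{k+2}$. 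The paper states this as easily verified, so I will simply invoke it. The divisibility step above is then the heart of the argument, and there is no real obstacle.
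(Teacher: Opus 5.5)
Your argument is correct and is essentially the paper's proof. The paper takes the largest index $j$ with $\alpha_j\neq\beta_j$, observes that $(\beta_j-\alpha_j)R_j$ is an integer combination of $n,R_1,\dots,R_{j-1}$ and hence divisible by $d_j$, divides by $d_{j+1}$, and uses ${\rm GCD}(e_j,R_j/d_{j+1})=1$ together with $0<\beta_j-\alpha_j<e_j$ to reach a contradiction. That is exactly your descending induction, and you also make the trivial case where only $\gamma_0$ survives explicit.

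One small remark on your optional aside, which does not affect the proof since you only invoke the gcd identity. The recursion as printed in the paper has an index slip and should read $R_{k+1}=R_ke_k+m_{k+1}-m_k$. That gives $R_{k+1}\equiv m_{k+1}$ modulo $d_{k+1}$, hence ${\rm GCD}(d_{k+1},R_{k+1})={\rm GCD}(d_{k+1},m_{k+1})=d_{k+2}$. Your version instead produces ${\rm GCD}(d_{k+1},m_k)$, which equals $d_{k+1}$, not $d_{k+2}$.
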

\begin{proof} Suppose that $\alpha_0 n+\sum_{i=1}^h\alpha_iR_i=\beta_0 n+\sum_{i=1}^h\beta_iR_i$ for some $\underline{\alpha}\not=\underline{\beta}$ in $N$. Let $j$ be the greatest integer such that $\alpha_j\not=\beta_j$, and suppose, without loss of generality, that $\beta_j>\alpha_j$. We have $(\alpha_0-\beta_0) n+\sum_{i=1}^{j-1}(\alpha_i-\beta_i)R_i=(\beta_j-\alpha_j)R_j$. Dividing this equality by $d_{j+1}$ we get

$$
(\alpha_0-\beta_0){\dfrac{n}{d_{j+1}}}+\sum_{i=1}^{j-1}(\alpha_i-\beta_i){\dfrac{R_i}{d_{j+1}}}=(\beta_j-\alpha_j)\dfrac{R_j}{d_{j+1}}. 
$$

\noindent As GCD$(n,d_1,\cdots,d_{j-1})=d_j$, it follows that $(\beta_j-\alpha_j)\dfrac{R_j}{d_{j+1}}$ divides $e_j=\dfrac{d_j}{d_{j+1}}$, but GCD$(e_j,\dfrac{R_j}{d_{j+1}})=1$, and $0<\beta_j-\alpha_j<e_j$. This is a contradiction.\end{proof}

\noindent It follows from Lemma \ref{u} that there exists a unique $\underline{\theta}$ such that if $\theta_0=O_X(c_{\theta}(X))$, then Int$(F,G)=\theta_0n+\sum_{i=1}^h\theta_iR_i$, whence $-S=\langle R_0=-n,R_1,\cdots,R_h\rangle$. This implies that

$$
S=\langle r_0=n,r_1,\cdots,r_h\rangle.
$$

\noindent As GCD$(r_0,\cdots,r_h)=1$, it follows that $S$ is a numerical semigroup. We set $\underline{r}=(r_0,r_1,\cdots,r_h)$, and we call it the $\underline{r}$-sequence associated with $f$. We set $\underline{d}=(d_1,\cdots,d_h,d_{h+1}=1)$, and we call it the $\underline{d}$-sequence associated with $f$. We set $\underline{e}=(e_1,\cdots,e_h)$, and we call it the $\underline{e}$-sequence associated with $f$. We set $\underline{m}=(m_1,\cdots,m_h)$, and we call it the $\underline{m}$-sequence associated with $f$. We will sometimes refer to $h$ as $h(f)$.

\medskip

\noindent The above results show that the calculation of the set of generators of $S$ requires the calculation of the set of Newton-Puiseux exponents of $F$, hence the calculation of a root $y(X)$ of $F(X,y)=0$. Next, we shall see later that this calculation is possible directly from the equation of $f$, using the notion of approximate roots of $f$.

\medskip 

\noindent Next, we shall give some arithmetic properties of $S=\Gamma(f)$. We shall need the following Lemma.

\begin{lema}\label{in-S} With the notations above, for all $k\in\lbrace 1,\cdots,h\rbrace, e_kr_k\in\langle r_0,\cdots,r_{k-1}\rangle$, and $e_k$ is the smallest integer with this property.
\end{lema}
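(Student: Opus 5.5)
We prove the two assertions separately, working with the negatives $R_k=-r_k$ (equivalently with $r_k$, since both statements are invariant under the sign change). Throughout we use that $d_k={\rm GCD}(R_0,\cdots,R_{k-1})={\rm GCD}(r_0,\cdots,r_{k-1})$ and $d_{k+1}={\rm GCD}(d_k,r_k)$, so that ${\rm GCD}(e_k,r_k/d_{k+1})=1$.

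\emph{Minimality.} Suppose $0<e<e_k$ and $er_k\in\langle r_0,\cdots,r_{k-1}\rangle$. Then $d_k$ divides $er_k$, so $e_k=d_k/d_{k+1}$ divides $e\cdot r_k/d_{k+1}$. Since $e_k$ is coprime to $r_k/d_{k+1}$, this forces $e_k\mid e$, which contradicts $0<e<e_k$. The same divisibility argument shows that $e_kr_k$ lies in the group $d_k{\mathbb Z}$ generated by $r_0,\cdots,r_{k-1}$. So only membership in the \emph{semigroup} needs real work.

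\emph{Membership.} I will use the pseudo-roots together with Lemma~\ref{u} applied to the polynomial $G_{k+1}$, whose pseudo-roots are $G_1,\cdots,G_k$. For $k=h$, replace $G_{h+1}$ by $F$ itself. Since $G_{k+1}$ is monic of degree $n/d_{k+1}=e_k\cdot(n/d_k)$ and $G_k$ has degree $n/d_k$, the $(G_1,\cdots,G_k)$-adic expansion of $G_{k+1}$ has the form
$$
G_{k+1}=G_k^{e_k}+\sum_{\underline\theta}c_{\underline\theta}(X)G_1^{\theta_1}\cdots G_k^{\theta_k},
$$
where every monomial in the sum has $\theta_k<e_k$ and $0\le\theta_i<e_i$ for $i<k$. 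Now intersect with $F$. By Proposition (1), ${\rm Int}(F,G_{k+1})=R_{k+1}$. For $k=h$ it equals $+\infty$, since $F$ vanishes on its own root. The key inequality, which I take from the construction of $R_{k+1}=R_ke_k+m_k-m_{k-1}$ and the contact structure of Lemma~\ref{contact}, is
$$
R_{k+1}>e_kR_k.
$$
This holds because $m_k>m_{k-1}$. By Lemma~\ref{u}, the distinct monomials in the sum have pairwise distinct intersection values with $F$. In the total sum the term $G_k^{e_k}$, of value $e_kR_k$, must therefore cancel against some monomial, since otherwise the minimum would be at most $e_kR_k<R_{k+1}$. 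Hence some monomial satisfies
$$
O_X(c_{\underline\theta})\,n+\sum_{i<k}\theta_iR_i+\theta_kR_k=e_kR_k.
$$
Lemma~\ref{u}, applied to the index $k$ with $\theta_k<e_k$ compared against $e_k$, together with the gcd argument above, then forces $\theta_k=0$. Thus $e_kR_k=O_X(c)\,n+\sum_{i<k}\theta_iR_i$. It remains to check the sign of $O_X(c)$, which should be $\le 0$ because $c\in{\mathbb K}[X^{-1}]$, so that $O_X(c)\,n=O_X(c)\,R_0\cdot(-1)$ is a nonnegative multiple of $R_0=-n$. Negating gives $e_kr_k\in\langle r_0,\cdots,r_{k-1}\rangle$.

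\emph{Main obstacle.} The delicate points are two. First, the inequality $R_{k+1}>e_kR_k$ must hold with the right sign, given that the $R_i$ are negative for the meromorphic curve. Second, the coefficients $c_{\underline\theta}$ must lie in ${\mathbb K}[X^{-1}]$ and not merely in ${\mathbb K}((X))$; otherwise the relation only lands in the group. For the second point I would first try to show that the pseudo-roots (or the approximate roots) of $f$ are polynomials in $x$. If that fails, I would argue purely arithmetically: the sequence satisfies $r_{k+1}<e_kr_k$, and an element of $d_k{\mathbb Z}$ that is large relative to the conductor of $\langle r_0/d_k,\cdots,r_{k-1}/d_k\rangle$ lies in the semigroup. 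I would prove the latter by induction on $k$.
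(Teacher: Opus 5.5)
There is a genuine gap, and it is exactly the step you flag. Everything before it is correct and close to the paper's proof. That includes the gcd argument for minimality, the forced cancellation of $G_k^{e_k}$ via Lemma~\ref{u}, and the gcd argument giving $\theta_k=0$. For the cancellation you use ${\rm Int}(F,G_{k+1})=R_{k+1}>e_kR_k$, whereas the paper intersects with $G_{k+1}$ itself and uses ${\rm Int}(G_{k+1},G_{k+1})=\infty$.

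But these steps only give $e_kr_k=-O_X(c)\,r_0+\sum_{i<k}\theta_ir_i$ with $O_X(c)\in\mathbb{Z}$. That is membership in the group $d_k\mathbb{Z}$, which your own divisibility remark already gave. The whole content of the lemma is $O_X(c)\le 0$, and you do not prove it.

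Your stated reason, $c\in\mathbb{K}[X^{-1}]$, is false for pseudo-roots, which are minimal polynomials over $\mathbb{K}((X))$ of truncated Puiseux series. For instance, $f=(y^3-x^2-3x-3)^2-x$ has one place at infinity with $\underline r=(6,4,3)$. Its root is $y=X^{-2/3}+X^{1/3}\pm\frac13X^{5/6}+\cdots$, so $G_2=y^3-X^{-2}-3X^{-1}-3-X$. Then $F=G_2^2+2XG_2+X^2-X^{-1}$ has a coefficient of positive order. The paper's text simply asserts $\theta_0\in\mathbb{N}$ at this point, so it does not fill the gap either.

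Your arithmetic fallback cannot work. The inputs you would use are $r_{k+1}<e_kr_k$, the gcd relations, and the induction hypothesis. All of these hold for $(6,4,1)$, yet $2\notin\langle 6,4\rangle$. Moreover, the threshold inequality $e_kr_k\ge d_k\,C(\langle r_0/d_k,\dots,r_{k-1}/d_k\rangle)$ fails for genuine curves. For example, $f=(y^5-x^2)^3-y$ has $\underline r=(15,6,2)$ and $e_2r_2=6<3\cdot 4=12$. The polynomial nature of $f$ must enter the argument, and in yours it never does.

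The repair is to run your argument verbatim with approximate roots in place of pseudo-roots:
\begin{enumerate}
\item Put $\tilde g_k(X,y)=g_k(X^{-1},y)$ with $g_k={\rm App}(f,d_k)\in\mathbb{K}[x][y]$, and $\tilde g_{h+1}=F$.
\item By the Abhyankar--Moh result ${\rm int}(f,{\rm App}(f,d_k))=r_k$ quoted in the section on approximate roots, together with Lemma~\ref{int-Int}, you get ${\rm Int}(F,\tilde g_k)=R_k$. This is all that Lemma~\ref{u} and your cancellation step require.
\item The $\tilde g_i$ are monic in $y$ of degree $n/d_i$ with coefficients in $\mathbb{K}[X^{-1}]$. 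So Euclidean division expands $\tilde g_{k+1}-\tilde g_k^{e_k}$ with coefficients in $\mathbb{K}[X^{-1}]$.
\item The monomial you isolate then has $O_X(c)=-{\rm deg}_x c\le 0$, which gives $e_kr_k\in\langle r_0,\dots,r_{k-1}\rangle$.
\end{enumerate}
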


\begin{proof} We shall prove the first assertion by induction on $k$. If $k=1$, then $e_1R_1=(d_1/d_2)R_1=(n/d_2)R_1=(R_1/d_2)n=(R_1/d_2)(-R_0)$, whence $e_1r_1\in\langle r_0\rangle$. Suppose that the assertion is true for $j\in\lbrace 1,\cdots,k-1\rbrace$, and Write 

$$
G_{k+1}=G_{k}^{e_{k}}+\alpha_1 G_{k}^{e_k-1}+\cdots+\alpha_{e_k}
$$ 

\noindent with ${\rm deg}_y\alpha_i<n/d_k$ for all $i\in\lbrace 1,\cdots,e_k\rbrace$ such that $\alpha_i\not=0$. In particular, 

$$
{\rm Int}(G_{k+1},\alpha_i)=\sum_{j=0}^{k-1}\theta^i_j\dfrac{R_j}{d_{k+1}}.
$$

\noindent A similar argument as in Lemma \ref{u} shows that for all $i\not=j$, if $\alpha_i\not=0$ and $\alpha_j\not=0$, then ${\rm Int}(G_{k+1},\alpha_iG_k^{e_k-i})\not={\rm Int}(G_{k+1},\alpha_jG_k^{e_k-j})$. Moreover, for all $i\in\lbrace 1,\cdots,e_k-1\rbrace$, if $\alpha_i\not=0$, then ${\rm Int}(G_{k+1},G_k^{e_k})\not={\rm Int}(G_{k+1},\alpha_iG_k^{e_k-i})$. As ${\rm Int}(G_{k+1},G_{k+1})$ is not an integer, this forces $e_kr_k$ to be equal to ${\rm Int}(G_{k+1},\alpha_{e_k})$, whence $e_k(R_k/d_{k+1})=\sum_{j=1}^{k-1}\theta_i(R_i/d_{k+1})$ with $\theta_0,\cdots,\theta_k\in{\mathbb N}$, and consequently $e_kr_k=\sum_{j=1}^{k-1}\theta_ir_i$. Now the same argument as in Lemma \ref{u} shows that for all $i\in\lbrace 1,\cdots,e_k-1\rbrace, ir_k\notin \langle r_0,\cdots,r_k\rangle$. This finishes the proof. 
\end{proof}

\noindent Let the notations be as above and let $s\in{\mathbb Z}$. As $d_{h+1}=1$, there exists $(\theta_0,\cdots,\theta_h)\in{\mathbb Z}^{h+1}$ such that

$$
s=\sum_{i=0}^h\theta_ir_i.
$$

\noindent If $\theta_h\notin [0,e_h[$, then we write $\theta_h=a_he_h+b_h$ with $0\leq b_h< e_h$. As $e_hr_h\in\langle r_0,\cdots,r_{h-1}\rangle$, we have $s=\sum_{i=0}^{h-1}\theta^1_ir_i+b_hr_h$ with $\theta_0^1,\cdots,\theta_{h-1}^1\in{\mathbb Z}$. Then an easy induction shows that 

$$
s=\alpha_0r_0+\sum_{i=1}^h\alpha_ir_i
$$

\noindent with $\alpha_0\in{\mathbb Z}$ and $0\leq \alpha_i<e_i$ for all $i\in\lbrace 1,\cdots,h\rbrace$. Moreover, a similar argument as in Lemma \ref{u} shows that this expression is unique. We call it the standard representation of $s$.

 \begin{proposicion} \label{arithmetic} (See \cite{Sat}) Let $s\in {\mathbb Z}$ and let $s=\alpha_0r_0+\sum_{i=1}^h\alpha_ir_i$ be the standard representation of $s$. We have the following.
     
     \begin{enumerate}
     \item $s\in S=\Gamma(f)$ if and only if $\alpha_0\geq 0$.

     \item Let $a,b\in{\mathbb Z}$, and suppose that $a+b=\sum_{i=1}^h(e_i-1)r_i-n$. We have $a\in S$ if and only if $b\notin S$. In particular $\sum_{i=1}^h(e_i-1)r_i-n$ is an odd integer.
      
      \item Let $F(S)=(\sum_{i=1}^h(e_i-1)r_i)-n$. For all $s>F(S), s\in S$, i.e. $F(S)$ is the Frobenius number of $S$ (equivalently, $F(S)$ is the largest integer not in $S$). We set $C(S)=F(S)+1$, and we call it the conductor of $S$.
 \end{enumerate}
    
\end{proposicion}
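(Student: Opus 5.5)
Part (1) is the core; parts (2) and (3) follow from it by arithmetic.

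\emph{Part (1).} If $\alpha_0\geq 0$, then $s=\alpha_0r_0+\sum\alpha_ir_i$ is visibly a nonnegative combination of the generators, so $s\in S=\langle r_0,\cdots,r_h\rangle$. For the converse, suppose $s\in S$ and write $s=\sum_{i=0}^h\theta_ir_i$ with all $\theta_i\in{\mathbb N}$. We reduce this to the standard representation by the procedure described before the statement. If $\theta_h\geq e_h$, write $\theta_h=a_he_h+b_h$ and replace $a_he_hr_h$ by $a_h$ times an expression of $e_hr_h$ in $\langle r_0,\cdots,r_{h-1}\rangle$; Lemma \ref{in-S} provides such an expression with \emph{nonnegative} coefficients. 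All coefficients therefore stay in ${\mathbb N}$, and we descend to index $h-1$, then $h-2$, and so on, until index $1$. The result is a representation with $\alpha_0\geq 0$ and $0\leq\alpha_i<e_i$ for $i\geq 1$. By the uniqueness of the standard representation (the argument of Lemma \ref{u}), this is the standard representation of $s$, so $\alpha_0\geq 0$.

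The one point to check is that the descent terminates. At each step, only indices $<k$ get modified once index $k$ is fixed, so induction downward on $k$ works.

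\emph{Part (2).} Let $c=\sum_{i=1}^h(e_i-1)r_i-n$, let $a=\alpha_0r_0+\sum\alpha_ir_i$ be standard, and set $b=c-a$. Then
$$
b=(-1-\alpha_0)r_0+\sum_{i=1}^h(e_i-1-\alpha_i)r_i ,
$$
and since $0\leq e_i-1-\alpha_i<e_i$, this is the standard representation of $b$. By part (1), $b\in S$ if and only if $-1-\alpha_0\geq 0$, that is, $\alpha_0<0$, that is, $a\notin S$. For oddness: if $c$ were even, take $a=b=c/2$; this contradicts the equivalence $a\in S\Leftrightarrow b\notin S$.

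\emph{Part (3).} If $s>c$, then $b=c-s<0$, so $b\notin S$ because $S\subseteq{\mathbb N}$. By part (2), $s\in S$. Finally, $c\notin S$, since its partner is $b=0\in S$. Hence $c$ is the largest integer not in $S$, i.e. $c=F(S)$.

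The main obstacle is part (1), specifically making sure the reduction keeps coefficients nonnegative. This rests on Lemma \ref{in-S} giving $e_kr_k$ as an element of the semigroup $\langle r_0,\cdots,r_{k-1}\rangle$, not merely as an integer combination of these generators.
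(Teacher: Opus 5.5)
Your proof is correct and follows the paper's argument. Part (1) uses the same descent via Lemma \ref{in-S} (keeping coefficients nonnegative, then invoking uniqueness), part (2) pairs standard representations the same way, and part (3) uses $c-s<0$. The only cosmetic difference is in (2): you write down the standard representation of $b=c-a$ directly, with $r_0$-coefficient $-1-\alpha_0$ (correct, whereas the paper's text slips to $\alpha_0+\beta_0=-n$), instead of adding two standard representations and arguing through Euclidean division.
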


\begin{proof} Clearly, if $\alpha_0\geq 0$, then $s\in S$. Conversely, if $s\in S$, then $s=\sum_{i=0}^h\theta_ir_i$ with $\theta_0,\cdots,\theta_h\in{\mathbb N}$. If $\theta_h\geq e_h$, then we write $\theta_h=a_he_h+b_h$ with $a_h\in{\mathbb N}$ and $0\leq b<e_h$. By Lemma \ref{in-S}, $e_hr_h\in\langle r_0,\cdots,r_{h-1}\rangle$, whence $s=\sum_{i=0}^{h-1}\theta^1_ir_i+b_hr_h$ with $\theta^1_i\geq 0$ for all $i\in\lbrace 0,\cdots,h-1\rbrace$. Then 1)  follows by an easy induction.

\medskip
\noindent To prove 2), let $a=\sum_{i=0}^h\alpha_ir_i$ (resp. $b=\sum_{i=0}^h\beta_ir_i)$ be the standard representation of $a$ (resp. $b$). We have

$$
\sum_{i=0}^h(\alpha_i+\beta_i)r_i=-n+\sum_{i=}^h(e_i-1)r_i.
$$

\noindent If $\alpha_h+\beta_h\geq e_h$, then, as $0\leq \alpha_h+\beta_h\leq 2e_h-2$, the Euclidean division of $\alpha_h+\beta_h$ by $e_h$ would not give $e_h-1$ as a remainder, which is a contradiction. Whence $\alpha_h+\beta_h=e_h-1$, and an easy induction shows that $\alpha_i+\beta_i=(e_i-1)$ for all $1\leq i\leq h-1$. We finally get $\alpha_0+\beta_0=-n$, whence

$$
a\in S\Longleftrightarrow \alpha_0\geq 0\Longleftrightarrow \beta_0<0 \Longleftrightarrow b\notin S.
$$

\noindent This finishes the proof.

\medskip 

\noindent 3) As $\sum_{i=1}^h(e_i-1)r_i-n+1$ is the standard representation of $F(S)$, it follows from 1) that $F(S)\notin S$. On the other hand, for all $s>F(S)$, $F(S)=s+F(S)-s$. But $F(S)-s<0$, hence $F(S)-s\notin S$, and by 2), $s\in S$. Finally suppose that $F(S)$ is an even number. We have $F(S)=(F(S)/2)+(F(S)/2)$ and $F(S)/2\notin S$. This contradicts 2).\end{proof}

\medskip

\begin{nota} Part 2) of Proposition \ref{arithmetic} implies that $S$ is a symmetric numerical semigroup (i.e. the cardinality of $g(S)$ of ${\mathbb N}\setminus S$, called also the genus of $S$, equals $C(S)/2$).  It follows that $S$ is irreducible. For the definition and properties of symmetric and irreducible numerical semigroups, we refer to \cite{assi3}.
\end{nota}

\section{Approximate roots and Abhyankar criterion} 
\noindent Let the notations be as in Section 2. In particular $f=y^n+a_1(x)y^{n-1}+\cdots+a_n(x)$ is a non zero polynomial with one place at infinity at the point $y=0$. Let $d$ be a divisor of $n$ and let $g$ be a monic polynomial of ${\mathbb K}[x][y]$ of degree $\dfrac{n}{d}$ in $y$. Write

$$
f=g^d+\alpha_1(x,y)g^{d-1}+\cdots+\alpha_d(x,y)
$$

\noindent with deg$_y\alpha_i<i$ for all $i$ such that $\alpha_i\not=0$. We call this expression the expansion of $f$ with respect to $g$. We say that $g$ is a $d$-th approximate root of $f$ if $\alpha_1=0$. Such a polynomial alawys exists, and it is unique (see \cite{a1}, for example). We denote it by App$(f,d)$. We also have the following.

\begin{nota} Let  $i\in\lbrace 1,\cdots,h-1\rbrace$, and let $d_i>d_{i+1}$ be two elements of the $\underline{d}$-sequence associated with $f$. Let $g_k={\rm App}(f,d_k), k=i,i+1$. We have 

$$
g_{i+1}={\rm App}(g_i,e_i=\dfrac{d_i}{d_{i+1}}).
$$
\noindent More precisely, if $g_{i+1}=g_i^{e_i}+\gamma_1g_i^{e_i-1}+\cdots+\gamma_{e_i}$ is the expansion of $g_{i+1}$ with respect to $g_i$, then $\gamma_1=0$.
\end{nota}

\noindent With these notations, we have the following.

\begin{proposicion}\label{r-sequence} (see \cite{a-m1}) Let $\underline{d}= (d_1,\cdots,d_h)$ be the $\underline{d}$-sequence associated with $f$. For all $k\in\lbrace 1,\cdots,h\rbrace$, if $g_k={\rm App}(f,d_k)$, then int$(f,g_k)=r_k$.
\end{proposicion}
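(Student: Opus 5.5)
Throughout, write $G_k^{\rm app}(X,y)=g_k(X^{-1},y)$. Lemma \ref{int-Int} gives ${\rm int}(f,g_k)=-{\rm Int}(F,G_k^{\rm app})$, so the statement becomes ${\rm Int}(F,G_k^{\rm app})=R_k$. The plan is to compare the approximate root with the pseudo-root $G_k$, which already satisfies ${\rm Int}(F,G_k)=R_k$ by the proposition on pseudo-roots. The key claim to establish is: $g_k$ is a monic polynomial of degree $n/d_k$ in $y$, and its roots have contact at least $m_k/n$ with the roots of $F$. In other words, $G_k^{\rm app}$ has a root $z(X)$ such that $O_X(z-y_1)\geq m_k/n$. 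Once this holds, $z$ agrees with $\bar y^k$ up to terms of order $\ge m_k/n$.

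\textbf{Step 1: intersection from a close root.} Assume the key claim. Then ${\rm Int}(F,G_k^{\rm app})=n\,O_X\big(G_k^{\rm app}(X,y_1(X))\big)$, and
$$O_X\big(G_k^{\rm app}(X,y_1)\big)=\sum_{z}O_X(y_1-z),$$
the sum running over the roots $z$ of $G_k^{\rm app}$. Each contact $O_X(y_1-z)$ is computed from the contacts of $y_1$ with the conjugates of $y_1$, via Lemma \ref{contact} and the ultrametric inequality. The result is the same as for the pseudo-root $G_k$, namely $R_k/n$. To see this, group the $n/d_k$ conjugates of $z$ by their contact with $y_1$; the counts are $(d_i-d_{i+1})/d_k$ at level $m_i/n$, for $i<k$. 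The conjugate closest to $y_1$ has contact exactly $m_k/n$, because contact $>m_k/n$ is impossible: $z$ has only exponents with denominator dividing $n/d_k$. The resulting sum telescopes to $R_k/n$ through the recursion $R_{i+1}=R_ie_i+m_i-m_{i-1}$.

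\textbf{Step 2: proving the key claim by Tschirnhausen-type reasoning.} Write $f=g_k^{d_k}+\alpha_2g_k^{d_k-2}+\cdots+\alpha_{d_k}$. The idea is to evaluate this identity at $y=y_1(X)$ and compare orders, using Lemma \ref{u}-type distinctness: by the $\underline{G}$-adic expansion argument, the summands have pairwise distinct orders. Suppose $\gamma=O_X(G_k^{\rm app}(X,y_1))$ were strictly smaller than $R_k/n$, i.e. the contact is too small. Then the term $g_k^{d_k}$ dominates, and the expansion cannot vanish at $y_1$ (since $F(X,y_1)=0$) unless some $\alpha_i g_k^{d_k-i}$ cancels it. Distinctness of orders modulo the lattice $\langle R_0,\dots,R_{k-1}\rangle/d_k$ rules this out, because $\deg_y\alpha_i<n/d_k$ forces ${\rm Int}(F,\alpha_i)\in\langle R_0,\dots,R_{k-1}\rangle$.

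An alternative, cleaner route for Step 2 is the classical Abhyankar argument via Newton's binomial formula. From $f=g^{d}+\alpha_2g^{d-2}+\cdots$ one gets $g={\rm App}(f,d)$ equal to the polynomial part of $f^{1/d}$ in ${\mathbb K}((X))((y^{-1}))$. Hence $g$ is obtained by truncating the $y$-adic expansion of the product $\prod_j(y-y_j)^{1/d}$. Expanding the roots of $g$ in this way shows that they agree with $\bar y^k$ up to order $m_k/n$.

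The main obstacle is Step 2, specifically controlling the cancellation. I would proceed by induction on $k$ using the Remark $g_{k+1}={\rm App}(g_k,e_k)$. For $k=1$, $g_1=y+a_1/n$ is explicit, and $m_1$ is the first exponent not divisible by $n$, so the contact claim is direct. For the induction step, I would apply the argument of Lemma \ref{in-S} to $g_{k+1}=g_k^{e_k}+\gamma_2g_k^{e_k-2}+\cdots$, with the induction hypothesis giving the intersections of the $g_i$, $i\le k$, with $F$.
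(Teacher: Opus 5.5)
The paper gives no proof of this statement (it only quotes it from Abhyankar--Moh), so your proposal has to stand on its own. It has a genuine gap: all the content lies in Step 2, which is not established.

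Step 1 is a correct reduction, once the recursion is read as $R_{i+1}=R_ie_i+m_{i+1}-m_i$. The same ultrametric count shows more: every monic $\psi\in\mathbb{K}((X))[y]$ of degree $n/d_k$ satisfies $\mathrm{Int}(F,\psi)\le R_k$, with equality iff some root of $\psi$ has contact $m_k/n$ with $y_1$. So what must be proved is exactly $n\gamma\ge R_k$, where $\gamma=O_X(g_k(X^{-1},y_1))$.

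Route A does not prove this, for two reasons.
\begin{itemize}
\item ``The term $g_k^{d_k}$ dominates'' means $O_X(\alpha_i(X,y_1))>i\gamma$ for all $i\ge 2$. Nothing in your argument controls the $\alpha_i$, which are manufactured from $g_k$ itself.
\item The congruence argument cannot produce a contradiction. Write $g_k(X^{-1},y)=G_k+\delta$ with $\deg_y\delta<n/d_k$. Expanding $\delta$ in $G_1,\dots,G_{k-1}$ and using Lemma \ref{u} gives $\mathrm{Int}(F,\delta)\in d_k\mathbb{Z}$, while $R_k\equiv m_k\not\equiv 0\pmod{d_k}$. Hence $n\gamma=\min(R_k,\mathrm{Int}(F,\delta))$, and the bad case is precisely $n\gamma=\mathrm{Int}(F,\delta)\in d_k\mathbb{Z}$. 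In that case every tie you could extract, namely $i\,n\gamma=\mathrm{Int}(F,\alpha_i)$ or $(j-i)\,n\gamma=\mathrm{Int}(F,\alpha_j)-\mathrm{Int}(F,\alpha_i)$, is compatible with all divisibility constraints.
\end{itemize}
What is missing is an \emph{inequality}, and that is where $\alpha_1=0$ must be used. Your argument uses it only by omitting the term.

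Route B only restates the goal. It is true that $g_k$ is the polynomial part of $F^{1/d_k}$ in $\mathbb{K}((X))((y^{-1}))$. But turning this $y^{-1}$-adic description into $X$-adic contact information is the whole difficulty. You would have to bound $O_X$ at $y_1$ of the polynomial part of $G_k\bigl((F/G_k^{d_k})^{1/d_k}-1\bigr)$, and the repeated factors $(y-w)^s$, with $w$ a root of $G_k$, make that polynomial part non-trivial.

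The proposed induction via Lemma \ref{in-S} does not help either. That argument works because the roots of the pseudo-roots are known explicitly, whereas the location of the roots of $g_{k+1}$ is exactly what is in question. (Also, the correct relation is $g_k=\mathrm{App}(g_{k+1},e_k)$; the Remark as printed has the indices swapped.)

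A standard way to close the gap is a Tschirnhausen-operator argument:
\begin{itemize}
\item Take $\psi$ monic of degree $n/d_k$ with $\mathrm{Int}(F,\psi)=R_k$, for example $\psi=G_k$. Let $a_1$ be the coefficient of $\psi^{d_k-1}$ in the $\psi$-adic expansion of $F$.
\item Show $\mathrm{Int}(F,a_1)\ge R_k$. By the congruence above this is then strict, so $\psi+a_1/d_k$ again has intersection $R_k$ with $F$.
\item A direct computation gives $\deg_y\bigl(\psi+a_1/d_k-g_k(X^{-1},y)\bigr)<\deg_y\bigl(\psi-g_k(X^{-1},y)\bigr)$. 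So finitely many such steps starting from $G_k$ reach $g_k(X^{-1},y)$, which therefore has intersection $R_k$.
\end{itemize}
The inequality $\mathrm{Int}(F,a_1)\ge R_k$ is the one-edge property of the Newton polygon of $F$ relative to $\psi$. It is the key lemma your proposal would have to supply.
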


\noindent The approximate roots of $f$ can be calculated from the equation of $f$. In particular the sequences associated with $f$ can be calculated in an algorithmic way: suppose, after possibly a change of coordinates, that App$(f,n)=y$. We set  $r_0=d_1=n$, $r_1={\rm int}(f,y)={\rm deg}_xa_n(x)$, and $d_2={\rm GCD}(r_0,r_1)$. Suppose that we already have $(r_0,\cdots,r_k)$ and $(d_1,\cdots,d_{k+1})$. We set $g_{k+1}={\rm App}(f,d_{k+1})$, $r_{k+1}={\rm int}(f,g_{k+1})$, and $d_{k+2}={\rm GCD}(r_{k+1},d_{k+1})$. This process will stop giving the $\underline{r}$-sequence as well as the set of other sequences associated with $f$.

\begin{exemple} \label{ex1} Let $f=y^6-2x^2y^3-xy+x^4$. We have

 \begin{itemize}
  \item $r_0=6=d_1$.

  \item ${\rm App}(f,6)=y, r_1={\rm int}(f,y)=4, d_2=2$.

  \item ${\rm App}(f,2)$: $f=(y^3)^2+(-2x^2)(y^3)+(-xy+x^4)$. We set $G=y^3-x^2$, then we get $f=G^2-xy$, hence
  ${\rm App}(f,2)=G$.

  \item $r_2={\mathrm int}(f,G)={\mathrm int}(G,xy)=5$, and $d_3=1$. Consequently, $(6,4,5)$ is the the $\underline{r}$-sequence of $f$.
  \end{itemize}
  \end{exemple}  
    \begin{nota} The $\underline{r}$-sequence associated with $f$ is not a minimal set of generators of $f$. In the example above, $\Gamma(f)=\langle 6,4,3\rangle=\langle 4,3\rangle$. We will see in Section 4. that the geometry of $C_f$ depends only on the $\underline{r}$-sequence.
        \end{nota}

\begin{nota} In the algorithm above, the intersection int can be calculated using only the expansion of $f$ with respect to its approximate roots. Precisely, let $k\geq 1$ and let 

$$
f=g_k^{d_k}+\alpha^k_2g_k^{d_k-2}+\cdots+\alpha^k_{d_k}
$$

\noindent be the expansion of $f$ with respect to $g_k$. We have ${\rm int}(f,g_k)={\rm int}(g_k,\alpha^k_{d_k})$, whence int$(f,g_k)$ is obtained from the $\underline{g}_{k-1}$-adic expansion of $\alpha^k_{d_k}$, where $\underline{g}_{k-1}=(g_1=y,g_2,\cdots,g_{k-1})$. Let $f=y^6-2x^2y^3-xy+x^4$ be the polynomial of example \ref{ex1}. We have $r_0=6=d_1, r_1=4$, and $d_2=2$. We have ${\rm App}(f,d_2)=y^3-x^2$, and $f=(y^3-x^2)^2-xy$, whence $r_2={\rm int}(f,y^3-x^2)={\rm int}(y^3-x^2,xy)$. But the $\underline{r}$-sequence associated with $y^3-x^2$ is $(r_0/d_2,r_1/d_2)=(3,2)$, hence ${\rm int}(y^3-x^2,xy)=3+2=5$.
\end{nota}

\noindent It follows from the algorithm above that if $f$ has one place at infinity, then we can calculate its $\underline{r}$-sequence (hence the other sequences) from the equation. Now the following question arises: given a polynomial in ${\mathbb K}[x,y]$, can we decide if this polynomial has one place at infinity? The answer is yes, and an algorithm has been given by Abhyankar in \cite{a3}. 

\medskip

\noindent {\bf Deciding if a polynomial has one place at infinity.} Let $P$ be a polynomial of ${\mathbb K}[x,y]$, and suppose, after possibly a change of coordinates, that 

$$
P(x,y)=y^p+b_2(x)y^{p-2}+\cdots +b_p(x).
$$ 

\noindent Let $d$ be a divisor of $p$, and let $g$ be a monic polynomial of degree $\dfrac{n}{d}$ in $y$. Let 

$$
P=g^d+\alpha_1g^{d-1}+\cdots+\alpha_d
$$

\noindent be the expansion of $P$ with respect to $g$. Set $\alpha_0=1$, and let $s={\rm int}(P,g)$, and for all $i$ such that $\alpha_i\not=0$, let $s_i={\rm int}(P,\alpha_i)$ (in particular $s_0=0)$). We define the Newton polygon of $P$ with respect to $g$, denoted ${\rm NP}(P,g)$, to be the set of compact faces of the convex hall of 

$$
\cup_{\alpha_i\not=0} (s_i,(d-i)s)+({\mathbb R}^-\times \lbrace 0\rbrace)
$$

\noindent {\bf The criterion.} Let $P$ be as above and assume that $P(0,0)=0$.  Let $r_0=d_1=p$. If $p=1$ then $P$ has one place at infinity. Suppose that $p\geq 2$.  If $b_p(x)=0$, then $P$ is not irreducible, whence it has at least two places at infinity. Suppose that $b_p(x)\not=0$, then clearly App$(P,p)=y$. Let $r_1=m={\rm deg}(b_p(x))={\rm int}(P,y)$. We set 
$$
d_2={\rm GCD}(d_1,r_1), g_2={\rm App}(f,d_2)
$$

\noindent and $r_2={\rm int}(P,g_2)$. Suppose that we have $g_1,\cdots,g_k$, and $r_0,r_1,\cdots,r_k$. We set 

$$
d_{k+1}={\rm GCD}(r_k,d_k), g_{k+1}={\rm App}(P,d_{k+1})
$$

\noindent and $r_{k+1}={\rm int}(P,g_{k+1})$. We have the following.

\begin{teorema} (See \cite{a3}) Let the notations be as above. The polynomial $P$ has one place at infinity if and only if the following conditions hold.

\begin{itemize}
    \item The sequence $(d_k)_k$ is strictly decreasing, and there exists $h$ such that $d_{h+1}=1$.

    \item Set $P=g_{h+1}$. For all $k\in\lbrace 1,\cdots,h\rbrace$, the Newton polygon NP$(g_{k+1},g_k)$ is the segment of line $((e_k\dfrac{r_k}{d_{k+1}},0), (0,e_k\dfrac{r_k}{d_{k+1}}))$.
\end{itemize}
    
\end{teorema}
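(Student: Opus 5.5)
The two implications are of different nature, so I treat them separately. Throughout I pass to the meromorphic side, $P\mapsto F(X,y)=P(X^{-1},y)$, and use Lemma \ref{int-Int} to translate between int and Int.

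\emph{Necessity.} Suppose $\xi_\infty(P)=1$. By Proposition \ref{finite-infinite}, $F$ is irreducible in ${\mathbb K}((X))[y]$, so it has Newton--Puiseux exponents and a $\underline{d}$-sequence $n=d_1>d_2>\cdots>d_{h+1}=1$. By Proposition \ref{r-sequence}, int$(P,{\rm App}(P,d_k))=r_k$. An induction on $k$ then shows that the algorithmically defined sequence agrees with the Puiseux one, and in particular that it is strictly decreasing and reaches $1$. For the Newton polygon condition, fix $k$ and write $g_{k+1}=g_k^{e_k}+\gamma_2g_k^{e_k-2}+\cdots+\gamma_{e_k}$, where $\gamma_1=0$ by the Remark on approximate roots. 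I then argue as in the proof of Lemma \ref{in-S}. The pseudo-root $G_{k+1}$ is irreducible with $\underline{r}$-sequence $(r_0,\dots,r_k)/d_{k+1}$, and the analogue of Lemma \ref{u} for this sequence shows that the quantities ${\rm Int}(g_{k+1},\gamma_ig_k^{e_k-i})$ are pairwise distinct and distinct from ${\rm Int}(g_{k+1},g_k^{e_k})$. Since the whole sum vanishes on a root, the minimum must be attained at least twice. The only terms that can realise the extreme value are $g_k^{e_k}$ and $\gamma_{e_k}$, and both lie at $e_kr_k/d_{k+1}$. Every other $\gamma_i$ lies strictly above the line joining these two points. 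Hence NP$(g_{k+1},g_k)$ is the stated segment.

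\emph{Sufficiency.} I argue by induction on $k$ that $g_{k+1}$ has one place at infinity, with $\underline{r}$-sequence $(r_0,\dots,r_k)/d_{k+1}$. The base case is $g_1=y$, which is trivially irreducible.

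For the inductive step, assume $G_k$ (the meromorphic version of $g_k$) is irreducible of degree $n/d_k$ with root $z(X)$ and $\underline{r}$-sequence known. I study $G_{k+1}$ along the valuation $v(H)={\rm Int}(G_k,H)$ restricted to polynomials of $y$-degree less than $\deg G_k$. I expand $G_{k+1}$ in powers of $G_k$ and apply a Newton-polygon argument relative to $G_k$, in the style of the classical Newton--Puiseux algorithm generalised to "$G_k$-adic" expansions. The single-edge hypothesis makes the initial form a function of $G_k^{e_k}$ and the leading term of $\gamma_{e_k}$. Because $\gcd(e_k, r_k/d_{k+1})=1$ by construction of the $d$'s, this initial form is (up to a unit) $T^{e_k}-c\,u$ with $u$ a monomial of value $e_kr_k/d_{k+1}$ not divisible by $e_k$ in the value group. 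Such a form is irreducible, and therefore $G_{k+1}$ admits no nontrivial factorisation.

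I expect this step to be the main obstacle, because the Newton-polygon/Hensel factorisation has to be carried out with respect to the non-monomial variable $G_k$. The most concrete route I see is the root-contact formulation. The roots of $G_{k+1}$ must all have contact exactly $m_k/n$-type with the roots of $G_k$, which yields a new Puiseux characteristic exponent with $\gcd$ dropping from $d_k$ to $d_{k+1}$. Counting roots via Lemma \ref{contact}, $\deg G_{k+1}=e_k\deg G_k$, which forces $G_{k+1}$ to be the minimal polynomial of a single series, i.e.\ irreducible.

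Taking $k=h$ gives $F=G_{h+1}$ irreducible, and Proposition \ref{finite-infinite} then yields $\xi_\infty(P)=1$.
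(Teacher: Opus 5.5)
\textbf{The sufficiency half has a fatal gap: the step you flag as the main obstacle is false under the hypotheses as written.} The paper gives no proof of this theorem (it only refers to Abhyankar), so your argument has to stand alone.

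Take $P=(y^2-x)^2-x^2y=y^4-2xy^2-x^2y+x^2$. The algorithm gives:
\begin{itemize}
\item $r_0=4$, $r_1=\deg b_4=2$, $d_2=2$;
\item $g_2={\rm App}(P,2)=y^2-x$, $r_2={\rm int}(P,g_2)=5$ (put $x=y^2$), $d_3=1$.
\end{itemize}
The points of NP$(g_2,g_1)$ are $(0,2\,{\rm int}(g_2,y))=(0,2)$ and $({\rm int}(g_2,x),0)=(2,0)$. The points of NP$(P,g_2)$ are $(0,2\cdot 5)$ and $({\rm int}(P,x^2y),0)=(2\cdot 4+2,0)$. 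Both are exactly the required segments. Yet $X^2P(X^{-1},y)=X^2y^4-2Xy^2-y+1$ has a root $y=1+O(X)$ in ${\mathbb K}[[X]]$. So $F$ has a factor of degree one and $\xi_\infty(P)=2$.

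What the hypotheses omit is $r_{k+1}<e_kr_k$ (i.e.\ $r_{k+1}d_{k+1}<r_kd_k$). This is necessary whenever $\xi_\infty(P)=1$, by the first part of Proposition \ref{conditions}. Here $r_2=5>4=e_1r_1$. This inequality is exactly what your inductive step consumes:
\begin{itemize}
\item In the $G_k$-adic picture it is MacLane's condition that the value $\lambda$ the edge assigns to $G_k$ exceed the value $G_k$ already has. That condition is what makes $\nu(\sum a_iG_k^i)=\min_i(v(a_i)+i\lambda)$ multiplicative. In the example, with $v=O_X$ at the root $X^{-1/2}$ of $G_2=y^2-X^{-1}$, the edge forces $\lambda=-5/4$. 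Then $\nu(y^2)=\nu(G_2+X^{-1})=-5/4\neq 2\nu(y)=-1$, so ``irreducible initial form implies irreducible $G_{k+1}$'' has no basis.
\item In root language it is what forces a root of $G_{k+1}$ to have contact with a root of $G_k$ beyond the last characteristic exponent of $G_k$. Here every root of $F$ has contact $-1/2$ or $-2/3$ with $X^{-1/2}$, no better than that exponent.
\end{itemize}
Even with the inequality added, you still owe three things:
\begin{itemize}
\item an actual proof of the step, either multiplicativity of $\nu$ plus a degree count, or a per-root Newton--Puiseux analysis;
\item a reconciliation of the hypothesis, which is phrased with ${\rm int}(g_{k+1},\cdot)$, with the $G_k$-valuation you actually use;
\item a correct root count: Lemma \ref{contact} cannot supply it, since it presupposes irreducibility, and $\deg G_{k+1}=e_k\deg G_k$ holds by construction and forces nothing.
\end{itemize}

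\textbf{On the necessity side the mechanism is right but it is applied to the wrong polynomial.} The mechanism is the argument of Lemma \ref{in-S}: the terms $\gamma_ig_k^{e_k-i}$ with $0<i<e_k$ have values pairwise distinct modulo $e_k$, so the two extreme terms must tie. To use it you must evaluate ${\rm Int}(g_{k+1},\cdot)$ at a single root, and know that ${\rm Int}(g_{k+1},\gamma_i)\in\langle r_0,\dots,r_{k-1}\rangle/d_{k+1}\subseteq e_k{\mathbb Z}$. Both require that the \emph{approximate root} $g_{k+1}={\rm App}(P,d_{k+1})$ have one place at infinity, with $\underline{r}$-sequence $(r_0,\dots,r_k)/d_{k+1}$ and approximate roots $g_1,\dots,g_k$.

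You invoke the pseudo-root $G_{k+1}$ instead. That is the minimal polynomial of a truncated Puiseux series, and in general a different polynomial. For instance, for $(y^3-x^2+c)^2-xy$ the approximate root is $y^3-x^2+c$ while the pseudo-root is $y^3-x^2$. The fact you need is the Abhyankar--Moh theorem on approximate roots. The paper's remark after the criterion derives it from this very criterion, so quoting it would be circular. One route is to derive it from ${\rm int}(P,g_{k+1})=r_{k+1}$ via a semi-root lemma: a monic polynomial of $y$-degree $n/d_{k+1}$ meeting $F$ in $R_{k+1}$ is irreducible with Newton--Puiseux exponents $m_1/d_{k+1},\dots,m_k/d_{k+1}$. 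That lemma would have to be proved.
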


\begin{exemples} 1. Let $f=y^6-2x^2y^3-xy+x^4$. It follows from Example \ref{ex1} that $r_0=6=d_1$, $g_1={\rm App}(f,6)=y, r_1={\rm int}(f,y)=4, d_2=2$, $g_2={\rm App}(f,2)=y^3-x^2, r_2={\mathrm int}(f,g_2)=5$, and $d_3=1$. Moreover
\begin{itemize}
   \item $g_2=g_1^3-x^2$, int$(g_2,g_1)=6$, int$(g_2,x^2)=6$, whence ${\rm NP}(g_2,g_1)$ is the segment of line $((0,6), (6,0))$.
   \item $f=g_3=g_2^2-xg_1$, int$(f,g_2^2)=10$, and int$(f,xg_1)=10$, whence ${\rm NP}(f,g_2)$ is the segment of line $((0,10), (10,0))$. 
\end{itemize}

\noindent In particular $f$ has one place at infinity.

\medskip

\noindent 2. Let $f=(y^3-x^2)^2-y$. Similar calculations as above show that $d_1=r_0=6, {\rm App}(f,6)=y$, $r_1=6$, $d_2=2$, App$(f,2)=y^3-x^2$, and int$(f, y^3-x^2)=2$, whence $d_3=d_2=2$. This proves that $f$ has more than one place at infinity.
\end{exemples}

\begin{nota} In the criterion above, if $P$ has one place at infinity, then for all $k\in\lbrace 1,\cdots,h\rbrace$, if 

$$
g_{k+1}=g_k^{e_k}+\alpha^k_2g_k^{e_k-2}+\cdots+\alpha_{e_k}
$$

\noindent is the expansion of $g_{k+1}$ with respect to $g_k$, then for all $i\in\lbrace 1,\cdots,e_k-1\rbrace$, if  $\alpha^k_i\not=0$, then int$(g_{k+1},\alpha^k_i)<i\dfrac{r_k}{d_{k+1}}$. 
    
\end{nota}

\noindent In connection with the criterion above, we address the following problem.

\medskip

\noindent {\bf Problem.} Given $P\in{\mathbb K}[x,y]$, is it possible to calculate the number of places at infinity only from the equation of $P$?

\medskip

\begin{nota} \label{r-sequence} Let $f=y^n+a_1(x)y^{n-1}+\cdots+a_n(x)$, and suppose that $n>{\rm deg}_xa_n(x)$, and also that $f$ has one place at infinity. 

\begin{itemize}
    \item It follows from the criterion above that for all $\lambda\in{\mathbb K}$, $f_{\lambda}=f-\lambda$ has one place at infinity. Moreover, $\Gamma(f)=\Gamma(f_{\lambda})$, and $f_{\lambda}$ has the same sequences as $f$. Also the sequence $(g_1=y,g_2,\cdots,g_h)$ of approximate roots of $f$ is also the sequence of approximate roots of $f_{\lambda}$. This nice property is proper to curves with one place at infinity.

\item  The same criterion shows the following: let $g_k={\rm App}(f,d_k), k\in\lbrace 1,\cdots,h\rbrace$. Then $g_k$ is a monic polynomial of degree $\dfrac{n}{d_k}$ in $y$, with one place at infinity. Moreover, if $S_k=\Gamma(g_k)$ denotes the numerical semigroup of $g_k$, then

$$
S_k=\langle \dfrac{n}{d_k},\cdots,\dfrac{r_{k-1}}{d_k}\rangle
$$
\noindent and $(\dfrac{n}{d_k},\cdots,\dfrac{r_{k-1}}{d_k})$ is the $\underline{r}$-sequence associated with $g_k$.
\end{itemize}
\end{nota}

\noindent The following proposition gives a characterization of $\underline{r}$-sequences associated with  polynomials with one place at infinity.

\begin{proposicion}\label{conditions} Let $f$ be as in Remark \ref{r-sequence}. It follows from the description of the sequences associated with $f$ that $r_kd_k>r_{k+1}d_{k+1}$ for all $k\in\lbrace 1,\cdots,h-1\rbrace$. Moreover, 

\begin{itemize}
    \item $r_{k}\dfrac{d_k}{d_{k+1}}\in\langle r_0,\cdots,r_{k-1}\rangle$.

    \item For all $1\leq i\leq d_k-1$, $ir_{k}\notin\langle r_0,\cdots,r_{k-1}\rangle$.
\end{itemize}

\medskip 
\noindent Conversely, let $(r_0,r_1,\cdots,r_h)$ be a sequence of non negative integers, and let $d_k={\rm GCD}(r_0,\cdots,$ $r_{k-1})$ for all $k\in \lbrace 1,\cdots,h+1\rbrace$. Suppose that the following conditions hold.

\begin{itemize}
    \item $r_0>r_1$ and $d_{h+1}=1$.

    \item $r_kd_k>r_{k-1}d_{k-1}$ for all $k\in\lbrace 2,\cdots,h\rbrace$.

    \item For all $k\in\lbrace 1,\cdots,h\rbrace$, $r_k\dfrac{d_k}{d_{k+1}}\in\langle r_0,\cdots,r_{k-1}\rangle$
    \item For all $1\leq i<\dfrac{d_k}{d_{k+1}}, ir_k\notin \langle r_0,\cdots,r_{k-1}\rangle$.
\end{itemize}

\noindent Then there exists a polynomial $f$ with one place at infinity such that $\underline{r}=(r_0,\cdots,r_h)$ is the $\underline{r}$-sequence associated with $f$.
\end{proposicion}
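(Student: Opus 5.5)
The plan has two halves: the necessary conditions, which come from the intersection theory of the approximate roots already developed, and the converse, which I would prove by building $f$ from explicit approximate roots and checking Abhyankar's criterion.

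\emph{Necessity.} For $k\in\lbrace 1,\cdots,h\rbrace$ let $g_k={\rm App}(f,d_k)$. By Proposition \ref{r-sequence}, ${\rm int}(f,g_k)=r_k$.

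The two membership conditions are Lemma \ref{in-S}: $e_kr_k\in\langle r_0,\cdots,r_{k-1}\rangle$, and $e_k$ is the least positive multiplier with this property. (The range of $i$ should read $1\leq i<e_k=d_k/d_{k+1}$.)

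For the inequality, recall from Remark \ref{r-sequence} that $g_{k+1}$ has one place at infinity and its $\underline{r}$-sequence is $(r_0/d_{k+1},\cdots,r_k/d_{k+1})$. Consider the expansion $g_{k+1}=g_k^{e_k}+\alpha_2g_k^{e_k-2}+\cdots+\alpha_{e_k}$. By Abhyankar's criterion, the Newton polygon ${\rm NP}(g_{k+1},g_k)$ is a single segment, so ${\rm int}(g_{k+1},\alpha_{e_k})=e_kr_k/d_{k+1}$.

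The term $\alpha_{e_k}$ has $y$-degree $<n/d_k$. By the uniqueness of standard representations (the argument of Lemma \ref{u}), its intersection is the maximum over its $\underline{g}$-adic monomials. Such a monomial has intersection $\theta_0 r_0+\sum_{i<k}\theta_ir_i$, divided by $d_{k+1}$, with $0\leq\theta_i<e_i$ for $i\geq 1$.

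Passing to the meromorphic side via Lemma \ref{int-Int} and comparing orders, one gets $e_kr_k/d_{k+1} < r_{k-1}e_{k-1}\cdot(d_{k-1}/d_{k+1})/e_{k-1}\cdots$. More cleanly, I would use the meromorphic recursion $R_{k+1}=R_ke_k+m_{k}-m_{k-1}$ with $m_k>m_{k-1}$ and $R_i=-r_i$. This gives $r_{k+1}<e_kr_k$, i.e. $r_{k+1}d_{k+1}<r_kd_k$. So the inequality follows directly from the definition of the $R_k$ and the strict increase of the Puiseux exponents. (The displayed direction in the statement's converse should then be $r_{k-1}d_{k-1}>r_kd_k$.)

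\emph{Converse.} Given $(r_0,\cdots,r_h)$ satisfying the hypotheses, I construct $g_1,\cdots,g_{h+1}$ inductively. Put $g_1=y$. Suppose $g_k$ has been constructed, monic of degree $n/d_k$ in $y$, with one place at infinity and $\underline{r}$-sequence $(r_0/d_k,\cdots,r_{k-1}/d_k)$. Write
$$e_kr_k=\theta_0r_0+\sum_{i=1}^{k-1}\theta_ir_i,\qquad \theta_0\geq 0,\ 0\leq\theta_i<e_i,$$
using the standard representation together with part 1) of Proposition \ref{arithmetic} applied to the hypothesis $e_kr_k\in\langle r_0,\cdots,r_{k-1}\rangle$. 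Then define
$$g_{k+1}=g_k^{e_k}-x^{\theta_0}g_1^{\theta_1}\cdots g_{k-1}^{\theta_{k-1}},$$
and finally set $f=g_{h+1}$.

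By construction, ${\rm App}(g_{k+1},e_k)=g_k$, because no $g_k^{e_k-1}$ term appears. The Newton polygon ${\rm NP}(g_{k+1},g_k)$ has only the two endpoints $(0,e_k r_k/d_{k+1})$ and $(e_kr_k/d_{k+1},0)$. Here ${\rm int}(g_{k+1},g_k)=r_k/d_{k+1}$ is computed from the expansion, in the manner of the remark following Example \ref{ex1}.

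The degree condition $\deg_x<\deg_y$-type requirement, i.e. that $y^{n}$ is the leading form, is where $r_kd_k>r_{k+1}d_{k+1}$ enters. It guarantees that the monomial $x^{\theta_0}\prod g_i^{\theta_i}$ has total degree strictly less than $n/d_{k+1}$, so $y^{n/d_{k+1}}$ dominates and the new curve still has its unique point at infinity at $y$.

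Abhyankar's theorem then gives that $f$ has one place at infinity. The sequence produced by the algorithm is $d_{k+1}={\rm GCD}(d_k,r_k)$, strictly decreasing because $e_k>1$ by minimality, with $r_k={\rm int}(f,g_k)$.

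\emph{Main obstacle.} The delicate step is verifying ${\rm int}(f,g_k)=r_k$ for the constructed polynomials, and keeping the degree bookkeeping consistent with the inequality hypothesis. I would handle this by induction, via ${\rm int}(g_{k+1},g_k)={\rm int}(g_k,x^{\theta_0}\prod g_i^{\theta_i})$ and the multiplicativity of ${\rm int}$. Minimality of $e_k$ is exactly what ensures that $d_{k+1}$ comes out as the prescribed GCD.
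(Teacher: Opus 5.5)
Your route is the paper's. Necessity comes from Lemma~\ref{in-S} plus the recursion for the $R_k$; your corrections $1\le i<e_k$ and $r_kd_k<r_{k-1}d_{k-1}$ in the converse are right. The converse is the construction $g_{k+1}=g_k^{e_k}-x^{\theta_0}g_1^{\theta_1}\cdots g_{k-1}^{\theta_{k-1}}$ followed by Abhyankar's criterion.

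\textbf{Where the converse breaks.} The hypothesis $r_kd_k<r_{k-1}d_{k-1}$ is never used where it is needed. You use it only for the leading form $y^n$, which is true (the ratios $r_id_i/n$ decrease) but not essential. Run your checklist on $\underline{r}=(4,2,5)$, which satisfies every hypothesis except the inequality ($10>8$). The construction gives $g_2=y^2-x$ and $f=g_2^2-x^2y$. Every check you list passes: the leading form is $y^4$, ${\rm App}(f,2)=g_2$, ${\rm int}(f,y)=2$, ${\rm int}(f,g_2)=5$, and ${\rm NP}(g_2,y)$, ${\rm NP}(f,g_2)$ are the required segments. Yet at $(1:0:0)$ the local equation $y^4-2y^2u+u^2-yu$ has tangent cone $u(u-y)$, so $f$ has two places at infinity. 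So ``Abhyankar's theorem then gives one place at infinity'' does not follow from what you verify. The inequality must enter the irreducibility step itself. One option is to invoke it as a hypothesis of Abhyankar's criterion in its complete form; the version recalled in the paper omits it. The other is the meromorphic fact that the new characteristic exponent of $G_{k+1}$ exceeds the previous one, which is exactly $r_kd_k<r_{k-1}d_{k-1}$.

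\textbf{The intersection numbers.} The same hypothesis is what makes ${\rm int}(f,g_j)=r_j$ hold for $j<h$, and multiplicativity does not give it. Inductively you need ${\rm int}(g_j,g_{k+1})=r_j/d_{k+1}$ for every $j<k$, not only $j=k$. If the monomial $M=x^{\theta_0}\prod_{i<k}g_i^{\theta_i}$ contains $g_j$, then $g_{k+1}\equiv g_k^{e_k}$ modulo $g_j$ and you are done. If $\theta_j=0$, you must use that $-{\rm int}(g_j,\cdot)$ is a valuation, since $g_j$ has one place at infinity. Then a difference of two terms with distinct intersections has intersection equal to the maximum. You must therefore prove ${\rm int}(g_j,M)<{\rm int}(g_j,g_k^{e_k})=e_kr_j/d_k$. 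Use ${\rm int}(g_j,g_i)=r_i/d_j$ for $0\le i<j$ (with $g_0=x$) and ${\rm int}(g_j,g_i)=r_j/d_i$ for $j<i<k$, and set $c_i=r_jd_j-r_id_i$. The claim then becomes $\sum_{j<i<k}\theta_ic_i/d_i<c_k/d_{k+1}$. This holds because $c_i>0$ increases with $i$, so the left side is at most $c_{k-1}(1/d_k-1/d_{j+1})$; that is precisely the inequality hypothesis.

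Without it the step fails. For $(6,4,15)$ the construction gives $f=(y^3-x^2)^2-x^5$. Its leading form is $y^6$, but ${\rm int}(f,y)=\deg_x(x^4-x^5)=5\neq 4$, and its $\underline{r}$-sequence is $(6,5)$.

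\textbf{A smaller point.} $e_k>1$ does not follow ``by minimality'' from the stated hypotheses. The sequence $(8,6,14,9)$ satisfies them with $e_2=1$, and the construction then yields a curve with sequence $(8,6,9)$. So $d_{k+1}<d_k$ has to be assumed.
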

\begin{demostracion}{.} The first assertions follow from Lemma \ref{in-S} and the equality $r_kd_k=r_{k-1}d_{k-1}+m_k-m_{k-1}$. Conversely, given $(r_0,\cdots,r_h)$ with the properties cited above, we set $g_1=y$, $g_2=y^{n/d_2}-x^{r_1/2}$, and for all $k\geq 2$, if $r_kd_k=\sum_{i=0}^{k-1}a_ir_i$ with $0\leq a_i<e_i$ for all $i$, then we set

    $$
g_{k+1}=g_k^{d_k/d_{k+1}}-x^{a_0}g_1^{a_1}\ldots g_{k-1}^{a_{k-1}}.
    $$

\noindent By Abhyankar criterion, $f=g_{h+1}$ is a polynomial with one place at infinity, and $(g_1,\cdots,g_h)$ is its set of approximate roots. 

\end{demostracion}
\medskip





 
\section{A constructive classification of polynomials with one place at infinity}

\noindent The constructive criterion of Section 2. gives a constructive way to classify the set of polynomials with one place at infinity (see \cite{assi2}). Given a sequence of integers $\underline{r}=(r_0=n, r_1,\cdots,r_h)$ that satisfies the conditions of Proposition \ref{conditions}, there exists a polynomial $f$ with one place at infinity such that $\underline{r}$ is the $\underline{r}$-sequence of $f$. Conversely the $\underline{r}$-sequence associated with a polynomial with one place at infinity satisfies the conditions of Proposition \ref{conditions}. Next we give, using the properties of $\underline{r}$-sequences of polynomials with one place at infinity, and the criterion of Section 2., a constructive classification of these polynomials.

\medskip

\noindent {\bf Classification.} (see \cite{assi2}) Let $f$ be as in Remark \ref{r-sequence}, and consider the sequences associated with $f$. The first approximate root of $f$ being of degree one in $y$, we shall assume that App$(f,n)=y$. Let $d_2={\rm GCD}(r_0,r_1)$ and let $g_2={\rm App}(f,d_2)$. As $\Gamma(g_2)=\langle n/d_2,r_1/d_2\rangle$, and using the properties of NP$(g_2,y)$, we can write 

$$
g_2={\rm App}(f,d_2)=y^{n/d_2}+a_2x^{r_1/d_2}+\sum c^{2}_{ij}x^iy^j
$$

\noindent  with $a_2\not=0$ and for all $(i,j)$, if $c^2_{ij}\in{\mathbb K}\setminus\lbrace 0\rbrace$, then $i\dfrac{r_0}{d_2}+j\dfrac{r_1}{d_2}<\dfrac{r_0}{d_2}\dfrac{r_1}{d_2}$. 

\noindent Suppose that $r_1$ divides $r_0$. We have $d_1=r_0$ and $d_2=r_1$, whence $\Gamma(g_2)=\langle r_0/d_2,1\rangle$. In particular, it follows from the inequality 

$$
i\dfrac{r_0}{d_2}+j\dfrac{r_1}{d_2}=i\dfrac{r_0}{d_2}+j<\dfrac{r_0}{d_2}\dfrac{r_1}{d_2}=\dfrac{r_0}{d_2}
$$

\noindent that $(i,j)=(0,0)$, and consequently $g_2=y^{n/d_2}+a_2x+b_2$ with $a_2\not=0$. In particular ${\mathbb K}[g_2,y]={\mathbb K}[x,y]$. Let $\sigma:{\mathbb K}[x,y]\longmapsto {\mathbb K}[X,Y]$ be the automorphism defined by $\sigma(g_2)=Y,\sigma(y)=X$ (whence $\sigma(x)=(Y-X^{n/d_2}-b_2)/a_2$ and $\sigma(y)=X$). Let $F(X,Y)=\sigma(f)$. Then $F$ has one place at infinity. Moreover, as the intersection multiplicity is an invariant modulo $\sigma$, the following holds.

\begin{enumerate}
    \item For all $g\in{\mathbb K}[x,y]\setminus\lbrace 0\rbrace$, we have ${\rm int}(f,g)={\rm int}(F,\sigma(g))$.
    \item ${\rm int}(F,\sigma(g_k))=r_k$ for all $k\in\lbrace 2,\cdots,h\rbrace$. 
\end{enumerate}

\medskip

\noindent In particular, the $\underline{r}$-sequence associated with $F$ is given by $(r_1,\cdots,r_h)$. Note that $r_1={\rm deg}_YF$ and $r_2={\rm deg}_XF(X,0)$, if $r_1<r_2$, then we consider $\tilde{F}(X,Y)=F(Y,X)$. From this we may assume that $r_1>r_2$. If $r_2$ divides $r_1$, then we apply to $F$ the same argument as for $f$. This motivates the following definition.

\begin{definicion}\label{r-reduced} With the notations above, we say that $f$ is reduced if the $\underline{r}$-sequence $(r_0,\cdots,r_h)$ associated to $f$ satisfies the following conditions.
\begin{enumerate}
    \item $r_0>r_1$.
    \item $r_1$ does not divide $r_0$.
\end{enumerate}
    
\end{definicion}

\noindent Then we get the following.

\begin{lema} Given a polynomial with one place at infinity, there exists an automorphism $\sigma$ of ${\mathbb K}^2$ such that $\sigma(f)$ is reduced.
    
\end{lema}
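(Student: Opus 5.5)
We argue by induction on the degree $n={\rm deg}_yf$, or equivalently on $r_0$, iterating the reduction step described just before Definition \ref{r-reduced}. Up to a linear change of coordinates we may write $f=y^n+a_1(x)y^{n-1}+\cdots+a_n(x)$ with ${\rm deg}_xa_i<i$, and after the Tschirnhausen translation $y\mapsto y-a_1(x)/n$ (a triangular automorphism) we may assume ${\rm App}(f,n)=y$. Then $r_0=n$ and $r_1={\rm deg}_xa_n(x)$. If $n=1$, then $f=y+a_1(x)$, and $(x,f)$ is an automorphism sending $f$ to a coordinate; this case is trivial, and we regard it as reduced, or as the degenerate endpoint of the process.

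Suppose first that $r_0>r_1$ and $r_1\nmid r_0$. Then $f$ is already reduced and we take $\sigma={\rm id}$. Note that $r_1\neq r_0$, since otherwise $d_2=d_1$, contradicting the strict decrease of the $\underline{d}$-sequence given by Abhyankar's criterion. There are therefore two remaining cases.

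\emph{Case $r_1>r_0$.} Here ${\rm deg}_x f=r_1>n={\rm deg}_y f$, and the unique point at infinity is determined by the top-degree form $y^n$. I expect this case cannot occur under the normalization ${\rm deg}_xa_i<i$: we have ${\rm deg}_xa_n<n$, so $r_1<r_0$ automatically. Thus only the divisibility condition needs attention. (If one drops this normalization, the swap $\tilde F(X,Y)=F(Y,X)$ handles the case, exactly as in the text.)

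\emph{Case $r_1\mid r_0$.} By the classification discussion, $g_2={\rm App}(f,d_2)=y^{n/d_2}+a_2x+b_2$ with $a_2\neq0$. Hence the automorphism $\sigma(g_2)=Y$, $\sigma(y)=X$ sends $f$ to a polynomial $F$ with one place at infinity whose $\underline{r}$-sequence is $(r_1,\dots,r_h)$. Concretely, ${\rm deg}_YF=r_1=d_2<n$. The main point to check is that $F$, after making it monic in $Y$ by a linear change, again satisfies the hypotheses: it has one place at infinity (since places at infinity are counted through the field ${\mathbb K}(f)$-structure and intersection numbers, which are preserved by $\sigma$), and its $\underline{r}$-sequence is the shifted one by the invariance ${\rm int}(F,\sigma(g))={\rm int}(f,g)$. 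This is the step requiring the most care: we must justify that the new first approximate root is $X$ up to translation, and that ${\rm deg}_Y F=r_1$. I would verify ${\rm int}(F,X)={\rm int}(f,y)=r_1$ and ${\rm int}(F,Y)={\rm int}(f,g_2)=r_2$, and read off the degrees from the fact that a one-place curve meets a line transversal to its point at infinity in ${\rm deg}$ points. Since the degree has strictly dropped from $n$ to $r_1$, the induction hypothesis gives an automorphism $\tau$ with $\tau(F)$ reduced, and $\tau\circ\sigma$ is the required automorphism.

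Finally, the process terminates: each non-reduced step lowers $r_0$ strictly (it also lowers the length $h$ of the sequence by one). It therefore ends either at a reduced polynomial or at $h=0$, i.e. $n=1$, where $f$ is a coordinate; in the latter case the statement holds vacuously or by convention.
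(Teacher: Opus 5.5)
\textbf{Gap: the swap step cannot be dismissed.} Your strategy is the paper's: iterate the automorphism built from $g_2={\rm App}(f,d_2)=y^{n/d_2}+a_2x+b_2$ until the sequence is reduced. But the degree bookkeeping in your inductive step is wrong, and it fails exactly where the paper needs the exchange of variables you ruled out.

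You argue that $r_1>r_0$ cannot occur because ${\rm deg}_xa_i<i$. That is true for the polynomial you start with. However, $F=\sigma(f)$ is in general no longer in normal form, and its sequence $(r_1,r_2,\ldots,r_h)$ can have $r_2>r_1$. Your own proposed check shows this. You have ${\rm int}(F,X)=r_1$ and ${\rm int}(F,Y)=r_2$ with $r_1\neq r_2$ (since $d_3<d_2=r_1$). The lines $X=0$ and $Y=0$ have different points at infinity, so at most one of them passes through the unique point at infinity of $C_F$, and that one has the smaller intersection number. Hence ${\rm deg}F=\max(r_1,r_2)$, not $r_1$.

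The paper's own example makes this concrete. For $f=(y^2-x)^2-xy$ with $\underline{r}=(4,2,3)$ you get $F=Y^2+XY-X^3$. Here ${\rm deg}_YF=r_1=2$, but the top form is $-X^3$, so $X=0$ passes through the point at infinity $(0:1:0)$ and ${\rm deg}F=3$. To put $F$ in normal form you must exchange $X$ and $Y$, obtaining the sequence $(3,2)$, as the paper does. So the exchange $F(Y,X)$ is needed at every later stage, not only at the start.

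Consequently the sentence ``the degree has strictly dropped from $n$ to $r_1$'' is false in general, and your induction on $n$ is not yet justified as written. The repair is short. When $r_1\mid r_0$ you have $d_1=n$ and $d_2=r_1$, so the inequality $r_1d_1>r_2d_2$ from Proposition~\ref{conditions} gives $r_2<n$. Together with $r_1<n$, this yields ${\rm deg}F=\max(r_1,r_2)<n$. Your induction hypothesis then applies to $F$, after the linear normalization, which is the swap $X\leftrightarrow Y$ when $r_2>r_1$.

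Alternatively, you can induct on $h$, as your termination remark suggests. In that case you must justify that after the swap the $\underline{r}$-sequence of $F(Y,X)$ is $(r_2,r_1,r_3,\ldots,r_h)$, which again has one term fewer.
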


\begin{exemple} Let $f=(y^2-x)^2-xy$. Then $\Gamma(f)=\langle 4,2,3\rangle$. Let $\sigma: {\mathbb K}[x,y]\longmapsto {\mathbb K}[X,Y]$ be the automorphism such that $Y=y^2-x, X=y$, then $\sigma(f)=Y^2-X(X^2-Y)=Y^2+XY-X^3$ whose $\underline{r}$-sequence is $(2,3)$. Interchanging $X$ with $Y$, and multiplying by $-1$, $\sigma(f)$ becomes $Y^3-XY+X^2$ whose $\underline{r}$-sequence is $(3,2)$, and $Y^3-XY+X^2$ is reduced.
    \end{exemple}
    
\noindent We shall next see how to construct the set of equations of reduced polynomials with one place at infinity (see \cite{assi3}). Let $(r_0,\cdots,r_h)$ be as in Definition \ref{r-reduced}. We set $r_0=n,r_1=m, g_1=y$, and

$$
g_2=y^{n/d_2}+a_2(x)y^{n/d_2-2}+\cdots+a_{n/d_2}(x)
$$

\noindent with the following conditions.
\begin{enumerate}
    \item For all $i\in\lbrace 2,\cdots,(n/d_2)-1\rbrace$, if $a_i(x)=\sum c^i_{k}x^k$, then $k(n/d_2)<i(m/d_2)$ for all $k$ such that $c^i_k\not=0$.
    \item deg$_x(a_{n/d_2}(x))=m/d_2$.
\end{enumerate}

\noindent Suppose that we already have $(g_1,\cdots,g_k)$, and let $g_{k+1}={\rm App}(f,d_{k+1})$. As $\Gamma(g_{k+1})=\langle r_0/d_{k+1},\cdots,r_k/d_{k+1}\rangle$, and by the properties of NP$(g_{k+1},g_k)$, if $r_ke_k=a_0^kr_0+\cdots+a_{k-1}^k r_{k-1}$ is the standard representation of $r_ke_k$, and if $U_k=\lbrace (\alpha_1,\cdots,\alpha_k)\in {\mathbb N}^k\mid 0\leq \alpha_i<e_i$ for all $1\leq i\leq k\rbrace$, then 

$$
g_{k+1}=g_k^{e_k}+c^{k+1}x^{a_0}g_1^{a_1}\cdots g_{k-1}^{a_{k-1}}+\sum_{\underline{\theta}^{k+1}\in U_k} c^{k+1}_{\underline{\theta}^{k+1}}x^{\theta^{k+1}_0}g_1^{\theta^{k+1}_1}\cdots g_k^{\theta^{k+1}_k}
$$

\noindent with $c^{k+1}\in{\mathbb K}\setminus\lbrace 0\rbrace$, and for all $\underline{\theta}^{k+1}$, if $c^{k+1}_{\underline{\theta}^{k+1}}\in{\mathbb K}\setminus\lbrace 0\rbrace$, then $\sum_{j=1}^k\theta_0^{k+1}r_0+\cdots+\theta_k^{k+1}r_k<r_ke_k$. 


\begin{exemple} Let $\underline{r}=\langle 6,4,5\rangle$. We set $g_1={\rm App}(f,6)=y$. As $d_2=2$, $\Gamma(g_2)={\rm App}(f,2)$ satisfies $\Gamma(g_2)=\langle 3,2\rangle$, whence

$$
g_2=y^2+ax^3+bxy+cy+dx+e
$$
\noindent with $a\not= 0$. Now $e_2r_2=10=4+6$. Finally 

$$
f=g_2^2+a_1xy+\sum c_{ij}x^iy^j
$$

\noindent with $a_1\in{\mathbb K}\setminus\lbrace 0\rbrace$ and for all $(i,j)$, $c_{ij}\in{\mathbb K}$, and $6i+4i<10$ (whence $(i,j)\in\lbrace (1,0),(0,1),(0,0)\rbrace$).
\end{exemple}

\medskip
 \begin{nota} The above classification can be refined in terms of the conductors of the given semigroups. More precisely, let $c$ be an even non-negative integer, then $c$ is the conductor of the semigroup of a polynomial with one place at infinity (for example, if $c=2p$, then $c$ is the conductor of the semigroup generated by $2p+1$ and $2$, and $(2p+1,2)$ is the $\underline{r}$-sequence of the polynomial $f=y^{2p+1}-x^2$, which has one place at infinity. For a fixed even integer $c$, the number of $\underline{r}$-sequences of polynomials with one place at infinity with $c$ as a conductor is finite. Using the above algorithm, the classification can be generalized into a construction of reduced equations of polynomials with one place at infinity with a fixed conductor for the associated semigroups. An algorithm has been given in \cite{assi4} and implemented in GAP (see \cite{del}).
 \end{nota}

 \noindent We end this section with the following result which will be used in Section 4.

\begin{proposicion}\label{aut} Let $f$ be a polynomial with one place at infinity, and let $(r_0=n,r_1,\cdots,r_h)$ be the $\underline{r}$-sequence associated with $f$, and suppose that $r_0>r_1$. For all $k\in\lbrace 1,\cdots,h\rbrace$, let $g_k={\rm App}(f,d_k)$. If the conductor $C(f)=0$, then the following conditions hold.

\begin{enumerate}
\item For all $k\in\lbrace 1,\cdots, h\rbrace$, $r_k=d_{k+1}$. In particular $r_h=1$, and $r_1$ divides $r_0=n$.
    \item For all $k\in\lbrace 1,\cdots, h\rbrace, C(g_k)=0$.
\end{enumerate}
    
\end{proposicion}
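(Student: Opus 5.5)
The plan is to turn the hypothesis $C(f)=0$ into one numerical identity, using the formula for the Frobenius number in Proposition \ref{arithmetic}, and then read off both assertions from it.

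\textbf{Step 1: the identity.} By Proposition \ref{arithmetic}(3),
$$C(f)=\sum_{k=1}^h(e_k-1)r_k-n+1 .$$
So $C(f)=0$ means $\sum_{k=1}^h(e_k-1)r_k=n-1$. Since $d_k=e_kd_{k+1}$, we have $(e_k-1)d_{k+1}=d_k-d_{k+1}$. Summing over $k$ telescopes:
$$\sum_{k=1}^h(e_k-1)d_{k+1}=d_1-d_{h+1}=n-1 .$$
Subtracting the two equalities gives
$$\sum_{k=1}^h(e_k-1)\,(r_k-d_{k+1})=0 .$$

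\textbf{Step 2: each term vanishes.} I show every summand is non-negative and every coefficient is positive.
\begin{itemize}
\item $e_k\geq 2$ for all $k$, because the $\underline{d}$-sequence is strictly decreasing.
\item $d_{k+1}=\mathrm{GCD}(d_k,r_k)$ divides $r_k$.
\item $r_k=\mathrm{int}(f,g_k)\geq 0$ by definition of int. If $r_k=0$, then $d_{k+1}=\mathrm{GCD}(d_k,0)=d_k$, contradicting $e_k>1$. Hence $r_k>0$.
\end{itemize}
A positive multiple of $d_{k+1}$ is at least $d_{k+1}$, so $r_k-d_{k+1}\geq 0$. A sum of non-negative terms with positive coefficients $e_k-1$ can vanish only if every $r_k-d_{k+1}$ is zero. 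Thus $r_k=d_{k+1}$ for all $k$. In particular $r_h=d_{h+1}=1$, and $r_1=d_2=\mathrm{GCD}(r_0,r_1)$ divides $r_0=n$. This proves (1).

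\textbf{Step 3: part (2).} By Remark \ref{r-sequence}, $g_k$ has one place at infinity and its $\underline{r}$-sequence is $(n/d_k,r_1/d_k,\dots,r_{k-1}/d_k)$. Its $\underline{d}$-sequence is therefore $(d_1/d_k,\dots,d_k/d_k=1)$, and its $\underline{e}$-sequence is $(e_1,\dots,e_{k-1})$. Proposition \ref{arithmetic}(3) applied to $g_k$, together with $r_i=d_{i+1}$ from part (1), gives
$$C(g_k)=\sum_{i=1}^{k-1}(e_i-1)\frac{d_{i+1}}{d_k}-\frac{n}{d_k}+1=\frac{d_1-d_k}{d_k}-\frac{n}{d_k}+1=0 .$$
The case $k=1$ is trivial: $g_1=y$, whose semigroup is $\mathbb N$.

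\textbf{Expected obstacle.} The only delicate point is justifying $r_k>0$, hence $r_k\geq d_{k+1}$. This is handled by the divisibility $d_{k+1}\mid r_k$ together with strict decrease of the $d_k$, as in Step 2.
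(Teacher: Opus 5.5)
Your proof is correct. Part (1) is the paper's argument in substance. Both compare $\sum_k(e_k-1)r_k=n-1$ with the telescoping identity $\sum_k(e_k-1)d_{k+1}=d_1-d_{h+1}=n-1$ and then use $r_k\geq d_{k+1}$. You additionally justify $r_k>0$ via $d_{k+1}<d_k$, which the paper leaves implicit.

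For part (2) your route differs. You substitute $r_i=d_{i+1}$ into the conductor formula for $g_k$ and telescope, which settles every $k$ at once. Your separate treatment of $k=1$ is not even needed, since there the empty sum gives $-1+1=0$.

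The paper instead uses the recursion $C(f)=d_hC(g_h)+(d_h-1)(r_h-1)$ and descends from $g_h$ to $g_{h-1}$ and so on. The recursion does not depend on part (1): since $d_hC(g_h)\geq 0$, $(d_h-1)(r_h-1)\geq 0$ and $d_h\geq 2$, the hypothesis $C(f)=0$ alone forces $C(g_h)=0$ and $r_h=1$. An induction on $h$ then yields both assertions together.

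Your direct computation is shorter and avoids the tacit induction hidden in the paper's ``in a similar way''.
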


\begin{proof} 1. The hypothesis implies that 

$$
\sum_{k=1}^h(e_k-1)r_k=n-1=\sum_{k=1}^h(e_k-1)d_{k+1}.
$$

\noindent On the other hand, for all $k\in\lbrace 1,\cdots,h\rbrace$, $d_{k+1}={\rm GCD}(r_k,d_k)$, whence $r_k\geq d_{k+1}$. Consequently, the above equality is true if and only if $r_k=d_{k+1}$. The other assertions follow immediately. 
\medskip 

\noindent 2. The $\underline{r}$-sequence associated with $g_h$ is  given by $(r_0/d_h,\cdots,r_{h-1}/d_h)$, whence $C(g_h)=\sum_{i=1}^{h-1}(e_i-1)(r_i/d_h)-(n/d_h)+1$. In particular $0=C(f)=d_hC(g_h)+(d_h-1)(r_h-1)$. This proves that $C(g_h)=0$. We prove in a similar way that $C(g_k)=0$ for all $k\in\lbrace 1,\cdots,h\rbrace$.
\end{proof}

\begin{corolario}\label{mu=0} Let the notations be as in Proposition \ref{aut}. If $C(f)=0$ then $f$ is equivalent to a coordinate, i.e. there exists an automorphism $\sigma$ of ${\mathbb K}[x,y]$ such that $\sigma\circ f$ is a coordinate of ${\mathbb K}[x,y]$. 
\end{corolario}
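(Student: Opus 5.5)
We argue by induction on $h=h(f)$, repeatedly using the reduction described at the beginning of this section, in which $r_1$ divides $r_0$. By Proposition \ref{aut}, $C(f)=0$ forces $r_k=d_{k+1}$ for every $k$; in particular $r_1=d_2$ divides $r_0=n$.

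\textbf{Base case.} If $h=0$, that is $n=1$, then $f=y+a_1(x)$ with ${\rm deg}_x a_1<1$, so $f=y+c$. This is already a coordinate; the automorphism $y\mapsto y-c$ even sends it to $y$. The same holds whenever $n=1$ after a change of coordinates.

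\textbf{Inductive step.} Since $r_1$ divides $r_0$, the classification argument above gives
$$
g_2={\rm App}(f,d_2)=y^{n/d_2}+a_2x+b_2, \qquad a_2\not=0 .
$$
Hence ${\mathbb K}[g_2,y]={\mathbb K}[x,y]$. Let $\sigma$ be the automorphism with $\sigma(g_2)=Y$ and $\sigma(y)=X$, and put $F=\sigma(f)$. As shown above, $F$ has one place at infinity, with $\underline{r}$-sequence $(r_1,\cdots,r_h)$. Its semigroup is therefore generated by $r_1,\cdots,r_h$.

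\textbf{Invariance of the conductor.} The conductor of $F$ is still $0$. Intersection multiplicities are preserved by $\sigma$, so $\Gamma(F)=\Gamma(f)$, which equals ${\mathbb N}$ because $C(f)=0$. One can also see it directly: the new sequence has $d$-sequence $(d_2,\cdots,d_{h+1})$ and $r_k=d_{k+1}$, so its conductor $\sum_{k\geq2}(e_k-1)r_k-r_1+1$ vanishes because it telescopes.

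\textbf{Normalising and closing the induction.} If the new first two terms violate $r_1>r_2$, we swap $X$ and $Y$ as in the text. If $r_2=r_1$, then $d_3=d_2$, which is impossible, so the $\underline{r}$-sequence strictly shortens; in effect the degree in the leading variable drops from $n$ to $r_1<n$. Since $\deg F$ in its leading variable is $r_1<n$, induction on $n$ applies and yields an automorphism $\tau$ such that $\tau(F)$ is a coordinate. Then $\tau\circ\sigma$ sends $f$ to a coordinate.

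\textbf{Main obstacle.} The delicate point is justifying that $F$ again has the normalised form $Y^{r_1}+\cdots$ required by Proposition \ref{aut}, namely that it is monic in one variable of degree $r_1$ with degree condition $<$. This comes from $F$ having one place at infinity with $r_1=\deg_Y F$, as noted in the text. The other needed fact is that its sequence still satisfies $r_0>r_1$ after a possible swap. If the hypothesis $r_0>r_1$ fails in the middle of the induction, we handle it by the swap, and when the two are equal we use $d_3<d_2$ to exclude that case.
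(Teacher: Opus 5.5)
Your proof is correct, but it runs the induction in the opposite direction from the paper.

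\textbf{The paper's route.} It works at the top of the tower of approximate roots. It expands $f$ with respect to $g_h={\rm App}(f,d_h)$. Since $r_h=1$ while every ${\rm int}(f,\alpha^h_i)$ lies in $\langle r_0,\cdots,r_{h-1}\rangle\subseteq d_h{\mathbb N}$, all $\alpha^h_i$ with $i<d_h$ are constants, and $\alpha^h_{d_h}=a_{h-1}g_{h-1}+b_{h-1}$. This gives ${\mathbb K}[f,g_h]={\mathbb K}[g_h,g_{h-1}]$. Part 2 of Proposition \ref{aut} ($C(g_k)=0$) then lets it descend to ${\mathbb K}[g_2,g_1]={\mathbb K}[x,y]$.

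\textbf{Your route.} You peel off the bottom level instead. You use only the shape of $g_2$, the triangular automorphism $(g_2,y)\mapsto(Y,X)$, the invariance $\Gamma(F)=\Gamma(f)={\mathbb N}$, and the drop in degree from $n$ to $r_1$. The intrinsic argument via $\Gamma$ is the one to rely on. It makes the exact $\underline{r}$-sequence of $F$ irrelevant, so induct on $n$ rather than on $h$.

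\textbf{What each buys.} Yours is the classical degree-reduction proof of Abhyankar--Moh, and it writes the automorphism as a product of elementary ones. The paper's argument instead produces an explicit complementary coordinate, ${\mathbb K}[f,g_h]={\mathbb K}[x,y]$. It also never needs to know that $\sigma(f)$ is again a normalized one-place polynomial.

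\textbf{Closing your flagged obstacle.} If $h=1$, then $g_2=f$ and $F=Y$ outright. If $h\geq 2$, take $c$ generic. Then
$$\deg_YF={\rm int}(F,X-c)={\rm int}(f,y-c)=r_1,$$
and similarly $\deg_XF={\rm int}(f,g_2-c)=r_2$, with $r_2<r_1$ because $r_1=d_2>d_3=r_2$. Since $F$ has one place at infinity, its top homogeneous part is $L^N$ for a linear form $L$. The inequality $\deg_XF<\deg_YF$ forces $L$ proportional to $Y$ and $N=r_1$. So, up to a nonzero constant, $F$ is monic in $Y$ of degree $r_1<n$ with the required degree bounds, and no swap is ever needed.

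A small point inherited from the text: with $r_1=d_2$ the weight inequality only forces $i=0$, so $b_2$ may be a polynomial in $y$ of degree $<n/d_2$ rather than a constant. This does not affect ${\mathbb K}[g_2,y]={\mathbb K}[x,y]$.
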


\begin{proof} Let $g_h={\rm App}(f,d_h)$. We have $\Gamma(g_h)=\langle r_0/d_2,\cdots,r_{h-1}/d_h=1\rangle$. Let

$$
f=g_h^{d_h}+\alpha^h_2g_h^{d_h-2}+\cdots+\alpha^h_{d_h}
$$

\noindent be the expansion of $f$ with respect to $g_h$. We have $\alpha^h_{d_h}\not=0$ and int$(f,\alpha_i)<ir_h$ for all $i$ such that $\alpha_i\not=0$. Note that int$(f,\alpha^h_i)\in\langle r_0,\cdots,r_{h-1}\rangle$. As $r_h=1$ and $i<e_h=d_h={\rm GCD}(r_0,\cdots,r_{h-1})$, it follows that int$(f,\alpha^h_i)=0$, whence $\alpha^h_i\in{\mathbb K}$. Now int$(f,\alpha^h_{d_h})=r_he_h=r_hd_h=d_h=r_{h-1}$, whence $\alpha^h_{d_h}=a_{h-1}g_{h-1}+b_{h-1}$ with $a_{h-1}\not=0$, and consequently 

$$
{\mathbb K}[f,g_h]={\mathbb K}[g_h,g_{h-1}].
$$

\noindent An induction argument shows that ${\mathbb K}[g_h,g_{h-1}]={\mathbb K}[g_{h-1},g_{h-2}]=\cdots ={\mathbb K}[x,y]$.
\end{proof}

\section{ Geometric applications} 

\noindent Let $f=y^n+a_1(x)y^{n-1}+\cdots+a_n(x)$ be a polynomial with one place at infinity, and suppose that deg$_xa_i(x)<i$ for all $i\in\lbrace 1,\cdots,n\rbrace$. Let $F(X,y)=f(X^{-1},y)$, and let $F_X$ (resp. $F_y$) be the $X$-derivative (resp. the $y$-derivative) of $F$. Let Root$(F)=\lbrace y_1,\cdots,y_n\rbrace$. We have

$$
F(X,y)=\prod_{k=1}^n(y-y_k),
$$

\noindent hence $F_y(x,y)=\sum_{k=1}^n\prod_{i\not=k}(y-y_i)$. In particular

$$
{\rm Int}(F,F_y)=nO_XF_y(X,y_1)=nO_X(\prod_{i=2}^n(y_1-y_i))
$$

\noindent and it follows from Lemma \ref{contact} that ${\rm Int}(F,F_y)=\sum_{k=1}^h(d_k-d_{k+1})m_k$. On the other hand, easy calculations show that $\sum_{k=1}^h(d_k-d_{k+1})m_k=\sum_{k=1}^h(e_k-1)R_k$. If $f_x$ (resp. $f_y$) denotes the $x$-derivative (resp. the $y$-derivative of $f$), then it follows that

$$
{\rm int}(f,f_y)=\sum_{k=1}^h(e_k-1)r_k.
$$

\noindent Let $\gamma(x)$ be a root of $F_y(X,y)=0$. By the chain rule of derivatives, we have

$$
\dfrac{d}{dX}F(X,\gamma)=\dfrac{dF}{dX}(X,\gamma)+\dfrac{dF}{dy}(X,\gamma)\gamma'(X)=\dfrac{dF}{dX}(X,\gamma),
$$

\noindent which implies that $O_XF(X,\gamma)-1=O_XF_X(X,\gamma)-1$. Adding this equality among the set of $(n-1)$ roots of $F_y$ we get 

$$
{\rm Int}(F,F_y)={\rm Int}(F_X,F_y)+n-1.
$$

\noindent In particular int$(f,f_y)={\rm int}(f_x,f_y)+n-1$. We finally get 

$$
{\rm int}(f_x,f_y)=\sum_{k=1}^h(e_k-1)r_k-n+1
$$

\noindent which is nothing but the conductor $C(S)$ of $S=\Gamma(f)$. 

\noindent Set $\mu={\rm int}(f_x,f_y)$. We call $\mu$ the Milnor number of the family $(f_{\lambda})_{\lambda}$. Whence,  
$\mu=C(S)=F(S)+1$. With these notations, we get the following.



\begin{proposicion}\label{aut1} Let the notations be as above, and suppose that $\mu=0$. For all $k\in\lbrace 1,\cdots,h\rbrace$, let $g_k={\rm App}(f,d_k)$. The following conditions hold.

\begin{enumerate}
\item For all $k\in\lbrace 1,\cdots, h\rbrace$, $r_k=d_{k+1}$. In particular $r_h=1$, and $r_1$ divides $r_0=n$.
    \item For all $k\in\lbrace 1,\cdots, h\rbrace, \mu_k={\rm int}((g_k)_x,(g_k)_y)=0$.
    \item The polynomial $f$ is equivalent to a coordinate, i.e. there exists an automorphism $\sigma$ of ${\mathbb K}[x,y]$ such that $\sigma\circ f$ is a coordinate of ${\mathbb K}[x,y]$. 
\end{enumerate}
    
\end{proposicion}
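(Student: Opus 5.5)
The plan is to reduce Proposition \ref{aut1} to Proposition \ref{aut} and Corollary \ref{mu=0}, using the identity just established: $\mu={\rm int}(f_x,f_y)=\sum_{k=1}^h(e_k-1)r_k-n+1=C(S)$. So the hypothesis $\mu=0$ says exactly that $C(f)=0$.

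Before invoking Proposition \ref{aut}, I have to arrange its standing hypothesis $r_0>r_1$. In the present normalization $\deg_xa_i(x)<i$, we have $r_1=\deg_xa_n(x)<n=r_0$ whenever $a_n\neq 0$. If $a_n=0$, then $f$ is divisible by $y$. Since $f$ has one place at infinity, the same holds for every $f_\lambda$, and $f$ is irreducible. Hence $f=y$ up to a constant, and the statement is trivial. So I may assume $r_0>r_1$.

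Part 1 then follows from Proposition \ref{aut}(1). The argument there is that $n-1=\sum_{k}(e_k-1)d_{k+1}$, which comes from the telescoping $\sum_k(d_k-d_{k+1})=d_1-1$. Combined with $r_k\geq d_{k+1}$, this forces $r_k=d_{k+1}$ for every $k$.

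For part 2, I use that $g_k$ has one place at infinity with $\underline{r}$-sequence $(r_0/d_k,\dots,r_{k-1}/d_k)$ (Remark \ref{r-sequence}). Applying the Milnor number computation above to $g_k$ gives $\mu_k=C(\Gamma(g_k))$. Proposition \ref{aut}(2) gives $C(g_k)=0$, hence $\mu_k=0$.

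I would spell out the inductive step of Proposition \ref{aut}(2) rather than leave it as ``similar'':
\[
C(g_{k+1})=d_kC(g_k)+(d_k/d_{k+1}-1)(r_k/d_{k+1}-1),
\]
after normalizing by $d_{k+1}$. Both terms are nonnegative, so $C(g_{k+1})=0$ forces $C(g_k)=0$. Descending induction from $g_{h+1}=f$ then gives $C(g_k)=0$ for all $k$. Alternatively, part 1 applied to $g_k$ gives the same conclusion directly, since $r_i/d_k=d_{i+1}/d_k$ for $i<k$.

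Part 3 is Corollary \ref{mu=0}. The approximate-root expansion of $f$ relative to $g_h$ has constant coefficients except the last one, which is $a g_{h-1}+b$ with $a\neq 0$. This gives ${\mathbb K}[f,g_h]={\mathbb K}[g_h,g_{h-1}]$, and iterating this (using part 2 for each $g_k$) reaches ${\mathbb K}[g_2,g_1]={\mathbb K}[x,y]$. Here $g_2=y^{n/d_2}+a_2x+b_2$, as in the classification when $r_1\mid r_0$. Hence $f$ is a coordinate.

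The main obstacle is the inductive step of Corollary \ref{mu=0}. One must check that the expansion of $g_{k+1}$ with respect to $g_k$ has the required shape, namely that all intermediate coefficients have intersection $0$ and are therefore constants. This uses the inequality ${\rm int}(g_{k+1},\alpha_i)<i\,r_k/d_{k+1}=i$ from the Abhyankar criterion remark, together with $r_k/d_{k+1}=1$, which is where part 1 enters.
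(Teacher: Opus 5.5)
Your proposal is correct and follows the paper's route. The paper's proof is just the observation that $\mu=C(S)$ turns the hypothesis into $C(f)=0$, after which Proposition \ref{aut} and Corollary \ref{mu=0} apply; you do exactly this, and you also fill in details the paper leaves implicit (that $r_0>r_1$ is automatic under $\deg_x a_i<i$, that $\mu_k=C(\Gamma(g_k))$, and the inductive step for $C(g_k)=0$).

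One small slip: in your recursion the coefficient of $C(g_k)$ should be $e_k=d_k/d_{k+1}$ rather than $d_k$ (the two agree only for $k=h$, which is the paper's case), but only nonnegativity of the two terms is used, so the argument is unaffected.
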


\begin{proof} This is nothing but Proposition \ref{aut} and Corollary \ref{mu=0}
\end{proof}

\noindent Let $p\in C_f$, and suppose, after possibly a change of coordinates, that $p=(0,0)$. We define the local Milnor number of $f$ at $p$, denoted $\mu_p(f)$, to be the dimension of the ${\mathbb K}$-vector space ${\mathbb K}[[x,y]]/(f_x,f_y)$. In particular $C_f$ is singular (resp. smooth) at $p$ if $\mu_p(f)>0$ (resp. $\mu_p(f)=0$). We define the Milnor number of $f$, denoted $\mu(f)$, to be the sum $\sum_{p\in C_f}\mu_p(f)$. We say that $C_f$ is smooth if $\mu(f)=0$ (equivalently, $C_f$ is smooth at each of its points). We also have $\mu=\sum_{\lambda}\mu(f_{\lambda})$. In particular, if $\mu=0$, then $C_{f_{\lambda}}$ is smooth for all $\lambda\in{\mathbb K}$. In particular, we get a geometric version of Proposition \ref{aut1}.

\begin{proposicion} Let the notations be as above. If the family $(f_{\lambda})_{\lambda}$ is smooth, then $f$ is equivalent to a coordinate.
    
\end{proposicion}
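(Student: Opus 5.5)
The plan is to reduce the statement to Proposition \ref{aut1}. That proposition concludes that $f$ is equivalent to a coordinate as soon as $\mu={\rm int}(f_x,f_y)=0$. So it is enough to show that smoothness of the whole family $(f_\lambda)_\lambda$ forces $\mu=0$.

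\textbf{Step 1: $\mu$ counts all critical points.} Note that $(f_\lambda)_x=f_x$ and $(f_\lambda)_y=f_y$ for every $\lambda$. Hence
$$
\mu={\rm int}(f_x,f_y)=\sum_{p\in C_{f_x}\cap C_{f_y}}{\rm int}_p(f_x,f_y).
$$
Each point $p=(a,b)$ of $C_{f_x}\cap C_{f_y}$ lies on exactly one curve of the family, namely $C_{f_\lambda}$ with $\lambda=f(a,b)$. At such a point we have
$$
{\rm int}_p(f_x,f_y)=\dim_{\mathbb K}{\mathbb K}[[x,y]]/(f_x,f_y)=\mu_p(f_\lambda),
$$
after translating $p$ to the origin. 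Summing over $p$, grouped according to the value $\lambda$, gives $\mu=\sum_\lambda\mu(f_\lambda)$, which is the equality the paper already records. The sum is finite, because $\mu$ is finite: $f_x$ and $f_y$ have no common component, since $\mu=C(S)$ is a finite integer by the computation preceding Proposition \ref{aut1}.

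\textbf{Step 2: smoothness gives $\mu=0$.} If the family is smooth, then $\mu(f_\lambda)=0$ for every $\lambda$. This means $\mu_p(f_\lambda)=0$ at every point $p\in C_{f_\lambda}$, so $f_x$ and $f_y$ have no common zero on any $C_{f_\lambda}$. Because the curves $C_{f_\lambda}$ cover ${\mathbb K}^2$, $f_x$ and $f_y$ have no common zero at all. Therefore $\mu=0$.

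\textbf{Step 3: conclude.} Proposition \ref{aut1}(3) now applies and gives an automorphism $\sigma$ with $\sigma\circ f$ a coordinate. Underlying this are Proposition \ref{aut} and Corollary \ref{mu=0}: $\mu=C(S)=0$ forces $r_k=d_{k+1}$, and the approximate roots then successively generate ${\mathbb K}[x,y]$.

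The only step needing care is Step 1: identifying the global intersection number ${\rm int}(f_x,f_y)$ with the sum of the local Milnor numbers over the fibres. This is immediate once one notes that each common zero of $f_x,f_y$ lies in exactly one fibre, and that ${\rm int}$ is the sum of the local ${\rm int}_p$ by the formula in the introduction. With that in place, the statement is a direct corollary of Proposition \ref{aut1}.
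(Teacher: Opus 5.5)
Your proposal is correct and follows the paper's own argument. The paper deduces this proposition from the identity $\mu=\sum_{\lambda}\mu(f_{\lambda})$, which forces $\mu=0$ when every $C_{f_\lambda}$ is smooth, and then applies Proposition \ref{aut1}; your Steps 1--2 simply spell out the fibrewise decomposition of ${\rm int}(f_x,f_y)$ that the paper states without proof.
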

\noindent Next, we will focus on rational curves with one place at infinity.
\subsection { Rational one place curves.}
\noindent Let $f=y^n+a_1(x)y^{n-1}+\cdots+a_n(x)$ be a polynomial with one place at infinity and suppose that deg$_xa_i(x)<k$ for all $i$ such that $a_i(x)\not=0$. Let the notations be as in the introduction. In particular, if $p\in C_f$, then $\xi_p(f)$ denotes the number of places of $f$ at $p$. Let $p\in C_f$. The order of the conductor of $C_f$ at $p$ is defined to be the codimension of the local ring of $C_f$ at $p$ in its integral closure. We denote it by $\delta_p$. We have $2\delta_p=\mu_p(f)+\xi_p(f)-1$ (see \cite{m}). Let $f_{\infty}=h_f(1,y,u)\in{\mathbb K}[[u,y]]$ be the local equation of $f$ at the point at infinity. We denote by $\delta_{\infty}$ the order of the conductor of $C_f$ at the point at infinity. As $\xi_{\infty}(f)=1$, we have $2\delta_{\infty}=\mu_{\infty}$, where $\mu_{\infty}$ denotes the local Milnor number at the point at infinity (which is the dimension of the ${\mathbb K}$-vector space ${\mathbb K}[[u,y]]/((f_{\infty})_u,(f_{\infty})_y)$. If we set $2\delta=\sum_{p\in C_f}2\delta_p$, then, by the genus formula, we get
$$
2g(f)+2\delta+2\delta_{\infty}=(n-1)(n-2)
$$

\noindent where $g(f)$ denotes the genus of $C_f$. On the other hand, by Bézout theorem, $n(n-1)={\rm int}(f,f_y)+{\rm int}(f_{\infty},(f_{\infty})_y)=\mu+\mu_{\infty}+2(n-1)$, whence $\mu+\mu_{\infty}=(n-1)(n-2)$. Combining these equalities, we get

$$
2g(f)+\sum_{p\in C_f}(\mu_p+\xi_p(f)-1)+\mu_{\infty}=\mu+\mu_{\infty}.
$$

\noindent This implies $2g(f)+\mu(f)+\sum_{p\in C_f}(\xi_p(f)-1)=\mu$. In particular, 

$$
\mu-\mu(f)=2g(f)+\sum_{p\in C_f}(\xi_p(f)-1),
$$
\noindent whence, $\mu=\mu(f)$ if and only if $g(f)=0$ and for all $p\in C_f, \xi_p(f)=1$. Roughly speaking, if $f$ is rational and has one place at each point of $C_f$, then for all $\lambda\not=0$, $C_{f_{\lambda}}$ is a smooth curve. Conversely, if $C_f$ is a smooth curve, then we get $\mu=2g(f)$. In particular, the geometric genus of a smooth curve of the family $(f_{\lambda})_{\lambda}$ coincides with the genus of the semigroup $\Gamma(f)$ of $f$. We also have the following.


\begin{proposicion}\label{rational} With the notations above, if $C_f$ is smooth and rational, then $C_f$ is equivalent to a coordinate, i.e. there exists an automorphism 
    $\sigma$ of ${\mathbb K}[x,y]$ such that $\sigma\circ f$ is a coordinate of ${\mathbb K}[x,y]$. In particular, for all $\lambda\in{\mathbb K}$, $V(f_{\lambda})$ is smooth and rational.
\end{proposicion}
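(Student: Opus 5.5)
The plan is to reduce the statement to Proposition \ref{aut1} by showing that $\mu=0$. The formula just established gives
$$
\mu-\mu(f)=2g(f)+\sum_{p\in C_f}(\xi_p(f)-1).
$$
By hypothesis $C_f$ is smooth, so $\mu(f)=\sum_{p\in C_f}\mu_p(f)=0$. Smoothness at each point $p$ also gives $\xi_p(f)=1$, because a smooth point is analytically irreducible. Equivalently, $2\delta_p=\mu_p(f)+\xi_p(f)-1$ forces $\delta_p=0$, and since $\xi_p(f)\geq 1$ this gives $\xi_p(f)=1$. Rationality gives $g(f)=0$. Hence $\mu=\mu(f)+0+0=0$.

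With $\mu={\rm int}(f_x,f_y)=C(S)=0$, part 3 of Proposition \ref{aut1} (that is, Proposition \ref{aut} together with Corollary \ref{mu=0}) supplies an automorphism $\sigma$ of ${\mathbb K}[x,y]$ such that $\sigma\circ f$ is a coordinate. Two hypotheses of Proposition \ref{aut} need checking. It was stated under $r_0>r_1$, which holds after the normalization $\deg_x a_i<i$. The case $n=1$, where $f=y+a_1(x)$ with $a_1$ constant, is trivial: $f$ is already a coordinate.

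For the last assertion, write $\sigma(f)=X$ for some coordinate system $(X,Y)$ with ${\mathbb K}[X,Y]={\mathbb K}[x,y]$. Since $\sigma$ fixes constants, $\sigma(f_\lambda)=X-\lambda$. Therefore $V(f_\lambda)$ is the image, under a polynomial automorphism of ${\mathbb K}^2$, of the line $X=\lambda$. A line is smooth and rational, and both properties are invariant under automorphisms of ${\mathbb K}^2$, so $V(f_\lambda)$ is smooth and rational for every $\lambda$. Alternatively, one can argue that $\mu=\sum_\lambda\mu(f_\lambda)=0$ gives smoothness of every fibre, and rationality follows from $\mu-\mu(f_\lambda)=2g(f_\lambda)+\sum(\xi_p-1)$, since the left side is $0$ and the right side is a sum of nonnegative terms, so $g(f_\lambda)=0$.

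The main point requiring care is the implication smooth $\Rightarrow \xi_p(f)=1$. I would justify it through the identity $2\delta_p=\mu_p+\xi_p-1$. When $\mu_p=0$ this gives $2\delta_p=\xi_p-1$. On the other hand, a nonsingular point has a local ring that is already a regular local ring, hence integrally closed, so $\delta_p=0$. This forces $\xi_p=1$.
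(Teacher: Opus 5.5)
Your proposal is correct and takes the paper's route. Like the paper, you combine smoothness and rationality with $\mu-\mu(f)=2g(f)+\sum_{p\in C_f}(\xi_p(f)-1)$ to get $\mu=0$, and then apply Proposition \ref{aut1}. You also spell out details the paper leaves implicit: why smoothness forces $\xi_p(f)=1$, the $r_0>r_1$ and $n=1$ checks, and the deduction of the final assertion about $V(f_\lambda)$.
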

\begin{proof} The hypotheses implies that $\mu=0$. Now we use Proposition \ref{aut1}.
\end{proof}

\noindent Next, we recall the result of S.S. Abhyankar and T.T. Moh.

\begin{proposicion} \label{A-M}(Abhyankar-Moh Theorem) Let $x(t),y(t)$ be two polynomials of ${\mathbb K}[t]$ and let $n={\rm deg}_tx(t)$, $m={\rm deg}_ty(t)$. Suppose that $m<n$. If ${\mathbb K}[x(t),y(t)]={\mathbb K}[t]$ then $m$ divides $n$, and $C_f$ is equivalent to a coordinate.
\end{proposicion}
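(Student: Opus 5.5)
Let $f$ generate the kernel of $\phi:{\mathbb K}[x,y]\to{\mathbb K}[t]$, $\phi(x)=x(t)$, $\phi(y)=y(t)$. The plan is to reduce the statement to Proposition \ref{rational}, and then read off the divisibility from Proposition \ref{aut1}.

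First I record the basic properties of $f$. Since $t\mapsto(x(t),y(t))$ is polynomial, the curve $C_f$ is rational and has one place at infinity, as noted in the introduction. The hypothesis ${\mathbb K}[x(t),y(t)]={\mathbb K}[t]$ says that the map ${\mathbb K}\to C_f$ is surjective onto the affine curve. Because $\phi$ is onto, its image ${\mathbb K}[x,y]/(f)$ is isomorphic to ${\mathbb K}[t]$, which is integrally closed. Hence the affine coordinate ring of $C_f$ is normal, and therefore $C_f$ is smooth: $\mu_p(f)=0$ and $\xi_p(f)=1$ for all $p\in C_f$.

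Second, I identify the degrees with the semigroup data. Since $\phi$ is an isomorphism onto ${\mathbb K}[t]$, for every $g\notin(f)$ we have ${\rm int}(f,g)=\dim{\mathbb K}[t]/(g(x(t),y(t)))={\rm deg}_t\,g(x(t),y(t))$. In particular ${\rm int}(f,x)=n$ and ${\rm int}(f,y)=m$. Moreover $\Gamma(f)=\{{\rm deg}_t P\mid P\in{\mathbb K}[t]\setminus\{0\}\}={\mathbb N}$, so $C(S)=0$, i.e.\ $\mu=0$. Equivalently, this follows from the formula $\mu-\mu(f)=2g(f)+\sum_p(\xi_p(f)-1)$, since the right-hand side vanishes here.

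Third, I apply the earlier results. Proposition \ref{rational}, or directly Proposition \ref{aut1}, yields that $f$ is equivalent to a coordinate.

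For the divisibility, I need $f$ in the normalized form of the paper. Its degree in $y$ equals ${\rm int}(f,x)=n$, and for a generic linear coordinate its leading behaviour is $y^n$ after the change making $f_n=y^n$. The $\underline r$-sequence starts with $r_0=n$, the degree of the generic line section. The delicate point is to ensure that $r_1=m$, i.e.\ that $y$ (with ${\rm int}(f,y)=m<n$) serves as ${\rm App}(f,n)$ after a translation.

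To arrange this, I plan to argue as follows. Because ${\rm deg}\,x(t)>{\rm deg}\,y(t)$, the point at infinity of $C_f$ lies in the $x$-direction. Swapping the roles of the variables so that $f$ is monic of degree $n$ in the variable playing ``$y$'', the first approximate root is a translate of the other coordinate. Its intersection with $f$ is the degree of that parametrization, namely $m$.

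With $r_0=n>r_1=m$ and $\mu=0$, Proposition \ref{aut1}(1) gives $r_1=d_2={\rm GCD}(n,m)$, so $m$ divides $n$.

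The main obstacle is this bookkeeping of coordinates: checking that $r_1$ is exactly $m$, and not a smaller value produced by a translation, is what ties the abstract statement of Proposition \ref{aut1} to the degrees of the parametrization.
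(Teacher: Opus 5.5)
Your proposal is correct and follows the paper's route. The hypothesis gives vanishing conductor, either through smoothness and $\mu=C(S)$, or more directly through your observation $\Gamma(f)={\mathbb N}$, since ${\mathbb K}[x,y]/(f)\cong{\mathbb K}[t]$. Proposition \ref{aut} (equivalently Proposition \ref{aut1}) then yields both $r_1\mid r_0$ and the equivalence to a coordinate.

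Your explicit check that $r_0=n$ and $r_1=m$ fills in what the paper leaves implicit, and it is simpler than your sketch suggests. No swap of variables is needed: the point at infinity is $(1:0:0)$, so $f_n=cy^n$ already. Hence ${\rm App}(f,n)=y+a_1/n$ with $a_1\in{\mathbb K}$, and ${\rm int}(f,y+a_1/n)=\deg_t(y(t)+a_1/n)=m$.
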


\begin{proof} Let $f(x,y)$ be the generator, monic in $y$, of the kernel of the map $\phi: {\mathbb K}[x,y]\longmapsto {\mathbb K}[t], \phi(x)=x(t),\phi(y)=y(t)$. Then $f$ has one place at infinity. Moreover, the hypothesis shows that $C_f$ is smooth. The result is a consequence of Proposition \ref{aut}.
    \end{proof}

\noindent Abhyankar-Moh Theorem has been generalized by V. Lin and M. Zaidenberg as follows.

\begin{teorema}(see \cite{LZ}) Let $f$ be a polynomial with one place at infinity, and assume that $\xi_p(f)=1$ for all $p\in C_f$, then $f$ is equivalent to a quasi-homogeneous polynomial. In particular, if $C_f$ is smooth, then $f$ is equivalent to a coordinate. 
    
\end{teorema}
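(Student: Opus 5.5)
We must propose a proof of the Lin–Zaidenberg theorem: if $f$ has one place at infinity and $\xi_p(f)=1$ for all $p\in C_f$, then $f$ is equivalent to a quasi-homogeneous polynomial. The natural tools from the paper are the formula $\mu-\mu(f)=2g(f)+\sum_p(\xi_p(f)-1)$, the $\underline{r}$-sequence machinery, and the reduction by automorphisms of Section 3.

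The plan is to separate the topological content from the algebraic normal form. First I treat the smooth case. If $C_f$ is smooth, then $\mu(f)=0$, and unibranchness is automatic. Lin–Zaidenberg additionally use that $C_f$ is simply connected, i.e. rational. Granting rationality, Proposition \ref{rational} gives that $f$ is equivalent to a coordinate, which is the last assertion.

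For the general statement, I first use the hypothesis $\xi_p(f)=1$ for all $p$ together with one place at infinity to show that $C_f$ is rational. The normalization of $\tilde C_f$ is a compact curve. Removing one point gives the normalization $\tilde C$ of $C_f$, and because $C_f$ is unibranch everywhere, $\tilde C\to C_f$ is a homeomorphism. Topologically one then needs $\pi_1(C_f)$, or at least $H_1(C_f)$, to vanish. I would obtain this from the structure of the family: the identity $\mu-\mu(f)=2g(f)$ (all $\xi_p=1$) lets one compare $C_f$ with the generic fibre, whose genus is $\mu/2=g(S)$. I am not confident this comparison forces $g(f)=0$ algebraically. I would try instead Zaidenberg's argument that a unibranch curve with one place at infinity is contractible, via the Euler characteristic of the pencil $f:\mathbb K^2\to\mathbb K$: $1=\chi(\mathbb K^2)=\chi(\text{generic fibre})+\sum_\lambda(\chi(C_{f_\lambda})-\chi(\text{generic}))$. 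Combined with $\chi(C_{f_\lambda})=1-2g(f_\lambda)$ for unibranch fibres, this should yield $g(f)=0$ for the special fibre considered. This is the first obstacle.

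Once $C_f$ is rational and unibranch, it is parametrized by a single injective polynomial map $t\mapsto(x(t),y(t))$, since a rational curve with one place at infinity has polynomial parametrization. The singularities are then unibranch cusps. The goal is to show that, after an automorphism produced by the reduction of Section 3, $f=y^a-cx^b$ with $\gcd(a,b)=1$. I would apply the reduction lemma to assume $f$ is reduced, so $r_0>r_1$ and $r_1\nmid r_0$. Then I would show $h=1$, meaning $\underline r=(a,b)$, so that $g_2=f$ has the shape $y^a+c x^b+\sum c_{ij}x^iy^j$ with lower weighted terms.

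The key step, and the main obstacle, is proving that a reduced unibranch rational $f$ has $h=1$ and no lower terms. My first attempt would use the equality $\mu(f)=\mu$ (valid since $g=0$ and all $\xi_p=1$). This says all the Milnor number of the family concentrates on $C_f$. I would then argue via semicontinuity and the Newton polygon $\mathrm{NP}(g_{k+1},g_k)$ that this concentration can happen only at a single point where the local semigroup equals $\Gamma(f)$, forcing the whole curve to be a translate of the quasi-homogeneous cusp $y^a=x^b$.
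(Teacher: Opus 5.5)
The paper gives no proof of this statement. It quotes Lin--Zaidenberg and then lists an algebraic, semigroup-based proof as an open problem, which is essentially what you are attempting. So your proposal has to stand on its own, and it has two genuine gaps.

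\textbf{First gap: rationality cannot be derived.} The statement as printed omits the hypothesis $g(f)=0$, which corresponds to simple connectedness in Lin--Zaidenberg's formulation, and your plan to deduce it from the other hypotheses cannot work.
\begin{itemize}
\item Take $f=y^3-x^2-1$. It is in normal form and has one place at infinity.
\item $C_f$ is smooth, since $f_x=-2x$ and $f_y=3y^2$ vanish only at the origin, which is not on $C_f$. Hence $\xi_p(f)=1$ everywhere.
\item $\underline{r}=(3,2)$ gives $\mu=C(S)=2$, so $2g(f)=\mu-\mu(f)=2$. Thus $C_f$ has genus one and is not equivalent to a coordinate.
\end{itemize}
Your Euler characteristic relation does not help. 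The generic fibre has Euler characteristic $1-\mu$, and a unibranch fibre has $1-2g(f_\lambda)$. Substituting these, the relation reduces to $\mu=\sum_\lambda \mu(f_\lambda)$ via $\mu-\mu(f_\lambda)=2g(f_\lambda)$, which the paper already contains. It is bookkeeping over all special fibres and does not determine the genus of any single one. In the example, the entire contribution $2$ comes from the cuspidal fibre $y^3=x^2$, not from $C_f$. You must assume $g(f)=0$; with that assumption the smooth case is exactly Proposition \ref{rational}, as you say.

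\textbf{Second gap: the passage to a quasi-homogeneous form is only named, not argued.} The equality $\mu(f)=\mu$ says the whole Milnor number of the pencil lies on $C_f$. It says nothing about how that number is distributed among the cusps of $C_f$. Nothing in your argument links the local semigroup at a finite point to $\Gamma(f)$, which is read off at the place at infinity. ``Semicontinuity and the Newton polygon'' supplies no mechanism excluding several cusps, or a single cusp whose local semigroup differs from $\Gamma(f)$. Excluding exactly these configurations is the content of Lin--Zaidenberg's theorem, which the paper notes is proved by geometric and topological means and which is not known to follow from semigroup arguments.

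\textbf{Your target is also misstated.} A reduced, rational, unibranch $f$ need not equal $y^a-cx^b$, nor be a translate or even an affine image of the cusp.
\begin{itemize}
\item Take $f=y^5-(x+y^2)^2=y^5-y^4-2xy^2-x^2$. It has $\underline{r}=(5,2)$, so it is reduced.
\item The substitution $x\mapsto x-y^2$ carries it to $y^5-x^2$.
\item It is nevertheless not an affine image of $y^5-x^2$. An affine substitution into $y^5-x^2$ whose degree-$5$ part is a multiple of $y^5$ must send $y$ to $\epsilon y+\zeta$. Its cubic part is then a multiple of $y^3$, never $-2xy^2$.
\end{itemize}
So even after proving $h=1$, you would still need to construct a non-affine automorphism absorbing the lower weighted terms, not show that those terms vanish.
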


\noindent {\bf Problem.} The proof of Lin-Zaidenberg Theorem uses geometric and topological results, and one may ask if there exists an algebraic proof that uses the properties of semigroups associated with curves with one place at infinity (see \cite{a-s} for some partial results).

\medskip





\noindent Next, we show that a family $(f_{\lambda})_{\lambda}$ of curves with one place at infinity has at most two rational elements (see \cite{assi5}). Let $p$ be a point of $C_f$. If $\mu_p(f)=0$, then $\xi_p(f)=1$, whence $\mu_p(f)=\xi_p(f)-1$. Suppose that $p=(0,0)$, and also that $\mu_p>0$. Let $\tilde{f}=y^N+b_1(x)y^{N-1}+\cdots+b_N(x)\in{\mathbb K}[[x]][y]$ be the Weierstrass polynomial of $C_f$ at $p$, and let $\tilde{f}=\prod_{k=1}^{\xi_p(f)}\tilde{f}_k$ be the decomposition of $\tilde{f}$ into irreducible components in ${\mathbb K}[[x]][y]$ (we recall that the number of irreducible components of $\tilde{f}$ is nothing but $\xi_p(f)$. We have int$(\tilde{f},\tilde{f}_y)=\sum_{k=1}^{\xi_p(f)}{\rm int}(\tilde{f}_k,(\tilde{f}_k)_{y})+2\sum_{1\leq i<j\leq \xi_p(f)}{\rm int}(\tilde{f}_i,\tilde{f}_j)$. If $N_k={\rm deg}_y\tilde{f}_k$ for all $k\in\lbrace 1,\cdots,\xi_p{f}\rbrace$, then int$(\tilde{f}_k,(\tilde{f}_k)_{y})=\mu_p(f_k)+N_k-1$. As int$(\tilde{f},(\tilde{f})_y)=\mu_p(\tilde{f})+N-1=\mu_p(f)+N-1$, we have
$$
\mu_p(f)+N-1={\rm int}(\tilde{f},\tilde{f}_y)=\sum_{k=1}^{\xi_p(f)}(\mu_p(\tilde{f}_k)+N_k-1)+2\sum_{1\leq i<j\leq \xi_p(f)}{\rm int}(\tilde{f}_i,\tilde{f}_j).
$$

\noindent Consequently, as $N=\sum_{k=1}^{\xi_p(f)}N_k$, 

$$
\mu_p(f)=(\sum_{k=1}^{\xi_p(f)}\mu_p(\tilde{f}_k))-\xi_p(f)+1+2\sum_{1\leq i<j\leq \xi_p(f)}{\rm int}(\tilde{f}_i,\tilde{f}_j).
$$

\noindent Note that in the second summation, ${\rm int}(\tilde{f}_i,\tilde{f}_j)\geq 1$ for all $1\leq i<j\leq \xi_p(f)$, whence $\sum_{1\leq i<j\leq \xi_p(f)}{\rm int}(\tilde{f}_i,\tilde{f}_j)\geq \xi_p(f)(\xi_p(f)-1)/2$. In particular, 

$$
\mu_p(f)\geq (\sum_{k=1}^{\xi_p(f)}\mu_p(\tilde{f}_k))+(\xi_p(f)-1)^2.
$$



\noindent It follows from the equality above that $\mu_p(f)\geq (\xi_p(f)-1)^2\geq \xi_p(f)-1$, whence $\mu(f)=\sum_{p\in C_f}\mu_p(f)\geq \sum_{p\in C_f}(\xi_p(f)-1)^2$. In particular $\mu-\mu(f)\leq 2g(f)+\mu(f)$, and it follows that

$$
\mu\leq 2g(f)+2\mu(f).
$$

\noindent If furthermore $g(f)=0$, then $\mu(f)\geq \mu/2$. As a corollary we get the following.

\begin{corolario} With the notations above, if $g(f)=0$, then the family $(f_{\lambda})_{\lambda}$ has at most one other rational singular element.
    \end{corolario}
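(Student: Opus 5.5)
We will use the global Milnor count $\mu=\sum_{\lambda\in{\mathbb K}}\mu(f_\lambda)$ together with the inequality $\mu\leq 2g(f_\lambda)+2\mu(f_\lambda)$ just proved.

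\emph{Reduction to an arbitrary member.} By Remark \ref{r-sequence}, every $f_\lambda=f-\lambda$ again has one place at infinity. It has the same sequences as $f$, hence the same semigroup $\Gamma(f_\lambda)=\Gamma(f)$. It also has the same partial derivatives, so it has the same Milnor number $\mu={\rm int}(f_x,f_y)=C(S)$. Consequently the whole preceding discussion applies verbatim with $f$ replaced by any $f_\lambda$. In particular, whenever $g(f_\lambda)=0$ we get $\mu(f_\lambda)\geq \mu/2$.

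\emph{Counting argument.} Assume $g(f)=0$ and $f$ singular, i.e. $\mu(f)>0$, so $\mu>0$. Suppose there were two further values $\lambda_1\neq\lambda_2$, both different from $0$, with $f_{\lambda_1},f_{\lambda_2}$ rational and singular. Applying the reduction to $f_0=f$, $f_{\lambda_1}$ and $f_{\lambda_2}$ gives
\[
\mu\geq \mu(f)+\mu(f_{\lambda_1})+\mu(f_{\lambda_2})\geq \tfrac{3}{2}\mu>\mu,
\]
which is a contradiction. Hence at most one other rational singular element exists. Even without assuming singularity, the same bound shows that at most two members of the family are rational with $\mu(f_\lambda)>0$ whenever $\mu>0$.

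\emph{The case $\mu=0$ and the main check.} If $\mu=0$, then no member of the family is singular, so the statement is vacuous; by Proposition \ref{aut1} all members are coordinates anyway. The only delicate point is that $\mu=\sum_\lambda\mu(f_\lambda)$ holds. This follows because the critical points of $f$, counted with multiplicity ${\rm int}(f_x,f_y)=\sum_p\dim{\mathbb K}[[x,y]]_p/(f_x,f_y)$, are distributed among the fibres $C_{f_\lambda}$ according to the critical value $f(p)=\lambda$. Each local contribution is exactly $\mu_p(f_\lambda)$, since $(f_\lambda)_x=f_x$ and $(f_\lambda)_y=f_y$.
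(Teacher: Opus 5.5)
Your argument is correct and is the paper's reasoning: the bound $\mu(f_\lambda)\geq \mu/2$ for every rational member (valid because each $f_\lambda$ has one place at infinity and the same $\mu$), combined with $\mu=\sum_\lambda\mu(f_\lambda)$, rules out three rational singular members. You also need not assume $f$ itself is singular, since $\mu>0$ already follows from $f_{\lambda_1}$ being singular, so your chain $\mu\geq \mu(f)+\mu(f_{\lambda_1})+\mu(f_{\lambda_2})\geq \tfrac32\mu>\mu$ covers that case as well.
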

    
\noindent Remark that if $\xi_p(f)\geq 3$, then  $\mu_p(f)\geq (\xi_p(f)-1)^2> \xi_p(f)-1$, whence $\mu(f)>\mu/2$, in particular we have the following.

\begin{teorema} (see \cite{assi5}) With the notations above, suppose that $g(f)=0$, and also that $C_f$ is singular. if $\mu(f)=\mu/2$, then for all singular point $p$ of $C_f$, $\xi_p(f)=2$, and int$(f_1,f_2)=1$, whence $p$ is a double point of $C_f$. Moreover, $C_f$ has exactly $\mu/2$ double points.
    
\end{teorema}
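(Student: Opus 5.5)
Since $g(f)=0$, the identity established above gives $\mu-\mu(f)=\sum_{p\in C_f}(\xi_p(f)-1)$. With $\mu(f)=\mu/2$ this becomes
$$
\mu(f)=\sum_{p\in C_f}(\xi_p(f)-1).
$$
The plan is to compare this with the local lower bound
$$
\mu_p(f)\geq \sum_{k=1}^{\xi_p(f)}\mu_p(\tilde f_k)+(\xi_p(f)-1)^2\geq \xi_p(f)-1
$$
at each point and force equality everywhere. Summing the inequalities $\mu_p(f)\geq \xi_p(f)-1$ over $p\in C_f$ reproduces the equality just displayed, so each of them must be an equality: $\mu_p(f)=\xi_p(f)-1$ for every $p$. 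In particular every intermediate inequality in the chain is also an equality at every point.

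Next I read off the local structure at a singular point $p$, where $\mu_p(f)>0$. First, $(\xi_p-1)^2=\xi_p-1$ forces $\xi_p\in\{0,1\}+1$, i.e. $\xi_p(f)\in\{1,2\}$. If $\xi_p(f)=1$, then $\mu_p(f)=0$, contradicting singularity, so $\xi_p(f)=2$. Second, $\sum_k\mu_p(\tilde f_k)=0$, so both branches $\tilde f_1,\tilde f_2$ are smooth. Third, equality in $\sum_{i<j}{\rm int}(\tilde f_i,\tilde f_j)\geq \xi_p(\xi_p-1)/2$ gives ${\rm int}(\tilde f_1,\tilde f_2)=1$. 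Two smooth branches meeting transversally form an ordinary double point, and then $\mu_p(f)=1$ by the displayed formula, $\mu_p=\sum\mu_p(\tilde f_k)-\xi_p+1+2\,{\rm int}(\tilde f_1,\tilde f_2)=0-2+1+2=1$.

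Finally, $\mu/2=\mu(f)=\sum_{p\ \rm singular}\mu_p(f)$ counts the singular points, since each contributes exactly $1$, so $C_f$ has exactly $\mu/2$ double points. The only delicate step is making sure the chain of inequalities is written in a single summable form, so that global equality forces each local inequality, including the intermediate ones, to be an equality. This holds because all the terms involved are nonnegative: the $\mu_p(\tilde f_k)$, the differences ${\rm int}(\tilde f_i,\tilde f_j)-1$, and $(\xi_p-1)^2-(\xi_p-1)$. Smooth points contribute $0=\xi_p-1$ and cause no issue.
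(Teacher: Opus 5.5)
Your proof is correct and follows essentially the paper's argument: the paper derives the theorem from the same chain $\mu_p(f)\geq\sum_k\mu_p(\tilde f_k)+(\xi_p(f)-1)^2\geq\xi_p(f)-1$ combined with $\mu-\mu(f)=\sum_{p}(\xi_p(f)-1)$ when $g(f)=0$, and it notes only that $\xi_p(f)\geq 3$ would force $\mu(f)>\mu/2$. You go further and show that every intermediate inequality must be an equality at each point, which gives two smooth branches meeting transversally and $\mu_p(f)=1$, and hence the count of exactly $\mu/2$ double points that the paper leaves implicit.
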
  

\begin{exemple} Let $n\geq 3$ and $x(t)=t^n-t, y(t)=t^{n-1}$. Let $f(x,y)$ be the kernel of the map $\phi:{\mathbb K}[x,y]\longmapsto {\mathbb K}[t], \phi(x)=x(t),\phi(y)=y(t)$. Then $f$ has one place at infinity, and $x'(t)=y'(t)=0$ does not have solutions in ${\mathbb K}$, whence all singular points of $C_f$ are double points. 
    
\end{exemple}
\begin{nota} Let $x(t)=t^3-3t$ and $y(t)=t^2-2$. The kernel of the map $\phi:{\mathbb K}[x,y]\longmapsto {\mathbb K}[t], \phi(x)=x(t),\phi(y)=y(t)$ is $f(x,y)=y^3-x^2-3y+2$, and $\nu=2$. Moreover, $\mu(f)=1=\mu/2$. If $g(x,y)=y^3-x^2-3y-2=f-4$, then $\mu(g)=1=\mu/2$, and $g$ is parametrized by $x=t^3+3t, y=t^2+2$. In this example, the family $(f_{\lambda})_{\lambda}$ has exactly two rational singular elements. We do not have similar examples when the degree is $\geq 4$, and we think that the family $(f_{\lambda})_{\lambda}$ has at most one rational singular element in this case.
    
\end{nota}

\noindent The above remark suggests the following question. Let $f$ be a nonzero polynomial of ${\mathbb K}[x,y]$ with one place at infinity. Does the family contain a rational element? The answer to this question is no. Let $f=(y^3-x^2)^2-4x$. Then the $\underline{r}$-sequence associated with $f$ is $(r_0=6,r_1=4,r_2=3)$, and the genus of $C_f$ is $\mu/2=3$. The pencil $(f_{\lambda})_{\lambda}$ has three singular elements, $f+3,f+3\alpha,f+3\alpha^2$, where $\alpha$ is a primitive cube root of unity in ${\mathbb K}$, and each of them had genus $1$. Thus the family does not contain rational elements.

\medskip

\begin{nota} Let $F,G\in {\mathbb K}[x,y]$. We say that $C_F$ and $C_G$ are isomorphic if their rings of coordinates are isomorphic. We say that $C_F$ and $C_G$ are equivalent if there exists an automorphism $\sigma:{\mathbb K}^2\longmapsto {\mathbb K}^2$ such that $\sigma(C_F)=C_G$. With these notations, Proposition \ref{A-M} says that if a rational smooth curve $C_f$ with one place at infinity is isomorphic to a coordinate, then $C_f$ is equivalent to a coordinate, i.e. the reduced $\underline{r}$ sequence of $f$ is $(r_0=1)$. This is not true for rational curves with one place at infinity. Let $x=t^4-t,z=t^3-1$, and $y=z^2+z/2=t^6-(3/2)t^3+1/2$. Clearly ${\mathbb K}[x,y]\subseteq {\mathbb K}[x,z]$. On the other hand, as $x=tz$, we have $x^3=t^3z^3=(t^3-1+1)z^3=z^4+z^3$, whence, as $y^2=z^4+z^3+(1/4)z^2$, $z^2=4(x^3-y^2)\in{\mathbb K}[x,y]$. In particular, $z=2(y-z^2)\in{\mathbb K}[x,y]$, and consequently ${\mathbb K}[x,y]={\mathbb K}[x,z]$. Now, if $f$ (resp. $g$) is the monic generator of the map $\phi:{\mathbb K}[x,y]\longmapsto {\mathbb K}[t]$ (resp. $\phi:{\mathbb K}[x,z]\longmapsto {\mathbb K}[t]$) such that $\phi(x)=x(t), \phi(y)=y(t)$ (resp. $\psi(x)=x(t), \psi(z)=z(t)$), then $f=z^4+z^3-x^3$, and $g=y^4-(1/2)y^3-2x^3y^2+(1/2)x^3y+x^6+x^3=(y^2-x^3)^2-(1/2)y(y^2-x^3)+x^3$, and the $\underline{r}$-sequence associated with $f$ (resp. $g$) is $(4,3)$ (resp. $(6,4,3)$), whence $C_f$ and $C_g$ are not equivalent. This example answers negatively a conjecture proposed by V. ShpilrainU and J.-T. Yu in \cite{V-Y}. It has been communicated to us by A. Sathaye.
\end{nota}

\noindent In connection with this example, one may ask the following.

\medskip 

\noindent {\bf Question.} Which $\underline{r}$-sequences are associated with rational curves with one place at infinity?







\end{document}